\title[KHAs as Hopf algebras]{K-theoretic Hall algebras of quivers with potential as Hopf algebras}
\author{Tudor P\u adurariu}
\address{Department of Mathematics, Columbia University, 
2990 Broadway, New York, NY 10027}
\email{tgp2109@columbia.edu}
\newtheorem{thm}{Theorem}[section]
\newtheorem{prop}[thm]{Proposition}
\theoremstyle{definition}
\newtheorem{thm*}[thm]{Theorem$^*$}
\newcommand{\comment}[1]{}
\renewcommand{\leq}{\leqslant}
\renewcommand{\geq}{\geqslant}
\newcommand{\X}{\mathcal{X}}
\begin{document}
\maketitle

\begin{abstract}
Preprojective K-theoretic Hall algebras (KHAs), particular cases of KHAs of quivers with potential, are conjecturally positive halves of the Okounkov--Smirnov affine quantum algebras. It is thus natural to ask whether KHAs of quivers with potential are halves of a quantum group.
For a symmetric quiver with potential satisfying a Kunneth-type condition,
we construct (positive and negative) extensions of its KHA 
which are bialgebras. In particular, there are bialgebra extensions of preprojective KHAs and one can construct their Drinfeld double algebra.
\end{abstract}

\section{Introduction}
\subsection{Hall algebras for quivers with potential} 
Let $Q=(I,E)$ be a symmetric quiver, let $W$ be a potential of $Q$, and let $d\in\mathbb{N}^I$ be a dimension vector of $Q$.
Denote by $\X(d)$ the stack of representations of $Q$ of dimension $d$ over $\mathbb{C}$ and by $\X(d)_0$ the (derived) zero locus of the regular function \[\text{Tr}\,W:\X(d)\to\mathbb{A}^1_\mathbb{C}.\] 
Let $D_{\text{sg}}\left(\X(d)_0\right)$
be the category of singularities. It is equivalent to the category of (coherent) matrix factorizations $\text{MF}(\X(d), W)$ for the regular function $\text{Tr}\,W$. 
The categorical Hall algebra is the category $\bigoplus_{d\in\mathbb{N}^I} D_{\text{sg}}\left(\X(d)_0\right)$ with multiplication
\begin{equation}\label{productHA}
m_{d,e}:=p_{d,e*}q_{d,e}^*: D_{\text{sg}}\left(\X(d)_0\right)\otimes D_{\text{sg}}\left(\X(e)_0\right)\to D_{\text{sg}}\left(\X(d+e)_0\right),
\end{equation}
where $q_{d,e}:\X(d,e)\to \X(d)\times \X(e)$ and $p_{d,e}:\X(d,e)\to\X(d+e)$ are the natural maps from the stack of extensions $\X(d,e)$. The K-theoretic Hall algebra (KHA) of $(Q,W)$ is $K_0$ of the categorical Hall algebra.

The above construction is suggested by Kontsevich--Soibelman in \cite{ks} where they studied its cohomological version, see \cite{P} for more details about the categorical and K-theoretic constructions.

\subsection{Preprojective Hall algebras}
For $Q$ a quiver, let $\mathfrak{P}(d)$ 
be the (derived) stack of representations of the preprojective algebra of $Q$ of dimension $d\in\mathbb{N}^I$. The category $\bigoplus_{d\in\mathbb{N}^I} D^b\left(\mathfrak{P}(d)\right)$
admits a Hall-type product \cite{VV}. The algebra obtained by taking $K_0$ of this category is called the  preprojective K-theoretic Hall algebra (KHA) of $Q$. It was studied by Schiffmann--Vasserot \cite{sv1}, Varagnolo--Vasserot \cite{VV}, Yang--Zhao \cite{yz2} and it was seen to be related to positive parts of quantum affine algebras. Conjecturally, an equivariant version of the preprojective KHA is isomorphic to the positive part of the Okounkov--Smirnov quantum affine algebra \cite{os}. It is thus natural to ask whether one can construct a full quantum group from a KHA.

For $Q$ a quiver, 
there is a tripled quiver with potential $\left(\widetilde{Q}, \widetilde{W}\right)$ whose Hall algebra is equivalent, via the Köszul equivalence/ dimensional reduction for categories of singularities \cite{I}, to the preprojective Hall algebra of $Q$.

\subsection{The coproduct}\label{coproduct}

For $d\in\mathbb{N}^I$, write $\X(d)=R(d)/G(d)$. 
For $\varepsilon\in E$, let $\mathbb{C}^*$ act on $R(d)$ by scalar multiplication of the linear map corresponding to $\varepsilon$.
Let $T\subset \left(\mathbb{C}^*\right)^E$ be a torus such that $W$ is $T$-invariant.  Denote by $q_\varepsilon$ the $T$-weight corresponding to $\varepsilon\in E$.

Let $\text{MF}_{\text{qcoh}, T}\left(\X(d)\times \X(e), W\right)$ be the category of quasi-coherent matrix factorizations \cite[Section 2.2]{T}, let $\lambda$ be a cocharacter of $G(d)\times G(e)$ which acts with non-negative weights on $R(d,e)$ and has fixed locus $R(d)\times R(e)$, and 
let \[\text{MF}_{\text{qcoh}, T}\left(\X(d)\times\X(e), W\right)_{\text{above}}\subset \text{MF}_{\text{qcoh}, T}\left(\X(d)\times\X(e), W\right)\] be the subcategory of quasi-coherent matrix factorizations
with $\lambda$-weights bounded above and with objects whose $\lambda$-weight $w$ components are in $\text{MF}_T(\X(d), W)_w$. 
Consider the functor:
\[\Delta'_{d,e}:=q_{d,e*}p_{d,e}^*: \text{MF}_T\left(\X(d+e), W\right)\to 
\text{MF}_{\text{qcoh}, T}\left(\X(d)\times\X(e), W\right)_{\text{above}}.\]
Let $T(d)$ be a maximal torus of $G(d)$ and let \[K_0^{T\times T(d)}(\text{pt})=K_0^T(\text{pt})\left[z_{ij}^{\pm 1}\Big|\,i\in I, 1\leq j\leq d_i\right].\]
Consider the set $\mathcal{I}_{d,e}$ of $K_0^{T\times T(d)\times T(e)}(\text{pt})$ with functions $1-q^{-1}_\varepsilon z_{i'j'}^{-1}z_{ij}$ for $i,i'\in I$,
$j'>d_{i'}$, $j\leq d_{i},$ and $q_\varepsilon$ is a weight of $T$ corresponding to an edge $\varepsilon$ from $i'$ to $i$.
The functor $\Delta'_{d,e}$ induces a map
\begin{equation}\label{cp}
\Delta'_{d,e}: K_0^T\left(\text{MF}(\X(d+e), W)\right)\to K_0^T\left(\text{MF}(\X(d)\times \X(e), W)\right)_{\mathcal{I}_{d,e}}.
\end{equation}
We make the following \textbf{Kunneth Assumption} on $(Q,W)$ and $T$:
\begin{equation}\label{assumption}
K_0^T\left(D_{\text{sg}}(\X(d)_0)\right)\otimes K_0^T\left(D_{\text{sg}}(\X(e)_0)\right)\cong K_0^{T\times T}\big(D_{\text{sg}}\left((\X(d)\times\X(e))_0\right)\big)
\end{equation}
for any dimension vectors $d$ and $e$;
see in this direction the Thom-Sebastiani theorem \cite{Pr}.
The pair $(Q,0)$ and any torus $T$ satisfy the Kunneth Assumption. The tripled quiver $\left(\widetilde{Q}, \widetilde{W}\right)$ and $T\subset (\mathbb{C}^*)^{E}\times\mathbb{C}^*_q$, see Subsection \ref{trqu}, satisfy the Kunneth Assumption after tensoring the two sides of \ref{assumption} by $\mathrm{Frac}\,K_0(BT)$. 

The functor $\Delta'_{d,e}$ (together with the equivalence $\text{MF}\cong D_{\text{sg}}$) induces a map into a completion, see Subsections \ref{comple} and \ref{cocompl} for more details:
\[\Delta'_{d,e}: K_0^T\left(D_{\text{sg}}(\X(d+e)_0)\right)\to K_0^T\left(D_{\text{sg}}(\X(d)_0)\right)\hat{\otimes} K_0^T\left(D_{\text{sg}}(\X(e)_0)\right).\]

\subsection{The extended algebra}
Let $\textbf{A}^{\geq}_T$ be the extended Hall algebra generated by $\text{KHA}_T(Q,W)$ and elements $h^+_{i,n}$ for $i\in I$, $n\geq 0$ with the relations \eqref{rel}. 
Denote by $\textbf{A}^{\geq}_T(d)$ the subspace generated by $K_0^T\left(D_{\text{sg}}(\X(d)_0)\right)$ and $h^+_{i,n}$. Consider the formal series $h_i(w):=\sum_{n\geq 0} h^+_{i,n}w^{-n}$. 
Define the coproduct:
\begin{align*}
\Delta_{d,e}: \textbf{A}^{\geq}_T(d+e)&\to \textbf{A}^{\geq}_T(d)\hat{\otimes} \textbf{A}^{\geq}_T(e)\\
x&\mapsto \left(\prod_{\substack{i\in I\\ j>d_i}} 
h_i(z_{ij})\right)* \Delta'_{d,e}(x).
\end{align*}


Consider also the algebra $\textbf{A}^{\leq}_T$ generated by  $\text{KHA}_T(Q,W)^{\text{op}}$ and generators $h^-_{i,n}$ for $i\in I$, $n\geq 0$, see Subsection \ref{exal}, which also admits a coproduct $\Delta$. 


\begin{thm}\label{thm1}
Let $(Q,W)$ be a symmetric quiver with potential. Let $T$ be a torus such that $W$ is $T$-invariant and such that $(Q,W)$ and $T$ satisfy the Kunneth Assumption \eqref{assumption}. Then
$\left(\textbf{A}^{\geq}_T, m, \Delta\right)$ and $\left(\textbf{A}^{\leq}_T, m, \Delta\right)$ are bialgebras.
\end{thm}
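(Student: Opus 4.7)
The plan is to verify the two non-trivial bialgebra axioms --- coassociativity of $\Delta$ and the compatibility $\Delta \circ m = (m \otimes m) \circ (\mathrm{id} \otimes \tau \otimes \mathrm{id}) \circ (\Delta \otimes \Delta)$ --- by first reducing them to statements about the geometric half-coproduct $\Delta'_{d,e} = q_{d,e*} p_{d,e}^*$, and then checking that the Cartan twist by $\prod_{j > d_i} h_i(z_{ij})$ supplies the precise combinatorial correction needed. Associativity of $m$ on the subalgebra $\mathrm{KHA}_T(Q,W) \subset \mathbf{A}^{\geq}_T$ is already known from the construction recalled in \cite{P}; since $\mathbf{A}^{\geq}_T$ is obtained by adjoining the $h^+_{i,n}$ modulo the relations \eqref{rel}, its associativity reduces to checking that these relations are compatible with the KHA product, which is essentially built into the definition.

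For coassociativity, I would work on the stack $\X(d,e,f)$ of three-step filtrations together with its natural maps to $\X(d) \times \X(e) \times \X(f)$ and $\X(d+e+f)$. The two factorisations through $\X(d+e, f)$ and $\X(d, e+f)$ fit into a base-change square at the level of matrix factorisations; composing $q_* p^*$ in each order and invoking the Kunneth Assumption \eqref{assumption} gives the coassociativity of $\Delta'$ at the level of $K_0$, after passing to the completion introduced in Subsection \ref{cocompl}. Promoting this to coassociativity of $\Delta$ is then a bookkeeping check on the Cartan prefactors: the product of $h_i$-series over $j > d_i$ decomposes as a product over $d_i < j \leq d_i + e_i$ and $j > d_i + e_i$, which matches what is produced by either iteration of $\Delta$ once one also accounts for the action of the right-hand $h_i$-series on the Chern roots of the middle factor through the relations \eqref{rel}.

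The bialgebra compatibility is the main obstacle. Following the Green-theorem strategy familiar from Hall-algebra settings, I would consider the stack parametrising a two-step filtration of an object of dimension $d + d' + e + e'$ together with a compatible filtration of its sub and quotient, and write both sides of $\Delta_{d+d',\, e+e'}(xy) = \Delta_{d,e}(x) \cdot \Delta_{d',e'}(y)$ as push-pulls along this stack. A base-change comparison with the direct computation yields a discrepancy concentrated on normal directions of $\mathrm{Ext}^1$-type between the two inner factors; after passing to $K$-theory and localising along the set $\mathcal{I}_{d,e}$, this discrepancy is exactly the product of the scattering factors $1 - q_\varepsilon^{-1} z_{i'j'}^{-1} z_{ij}$, and the Cartan twist $\prod h_i(z_{ij})$ in the definition of $\Delta$, together with the commutation relations \eqref{rel} between $h^+_{i,n}$ and KHA elements, is engineered to cancel it. The most delicate step is to match this Euler-class product with the generating-series identity produced by the Cartan twist; once this is done for $\mathbf{A}^{\geq}_T$, the statement for $\mathbf{A}^{\leq}_T$ follows by the same argument with opposite conventions, and the Kunneth Assumption is used throughout to identify $K_0$ of the relevant products with (completed) tensor products.
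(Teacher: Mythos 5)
Your overall architecture is reasonable, but it diverges from the paper's actual route and, more importantly, it defers precisely the step in which all the content of the theorem resides. The paper does not run a Green-theorem argument on a stack of compatible filtrations. Instead it restricts along a generic cocharacter to the fixed locus $\mathcal{Y}(d)^\lambda\cong\times_{i\in I}\X(\delta_i)^{\times d_i}$, uses the localization theorem together with \cite[Proposition 2.4]{P} to show that the composite $K_0^T(D_{\text{sg}}(\X(d)_0))\to K_0^T(D_{\text{sg}}(\mathcal{Y}(d)^\lambda_0))_{\mathcal{I}}$ is injective, and then verifies $\Delta_{c,f}(x*y)=\sum\Delta_{e_1,e_2}(x)*\Delta_{e_3,e_4}(y)$ as an explicit identity of symmetrized rational functions, using the shuffle formulas of Proposition \ref{p2} for both $m$ and $\Delta'$. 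The entire proof is then the bookkeeping of which $\zeta_{ii'}$-factors appear in numerators and denominators on each side, and the observation that commuting $x_{A^1}$ past $\prod h_{i'}(z_{i'j'})$ via \eqref{rel} produces exactly the ratio $\prod\zeta_{ii'}(z_{ij}/z_{i'j'})\big/\prod\zeta_{ii'}(z_{ij}/z_{i'j'})$ over the two ``mixed'' index sets needed to reconcile the two sides. Your proposal names this step (``the most delicate step is to match this Euler-class product with the generating-series identity produced by the Cartan twist'') but does not carry it out; since this matching is the theorem, the proposal as written is a plan rather than a proof. Note also that the correction factor is not just the product of $1-q_\varepsilon^{-1}z_{i'j'}^{-1}z_{ij}$: the full $\zeta_{ii'}$ of \eqref{zeta} carries a denominator $(1-z^{-1})^{\delta_{ii'}}$ coming from the group direction, and $\tau_{ii'}(z)=\zeta_{i'i}(z)/\zeta_{ii'}(z^{-1})$ is a ratio of such factors; getting the compatibility to close up requires tracking both.

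There is a second gap of a more structural kind. You propose to verify the compatibility ``after passing to $K$-theory and localising along the set $\mathcal{I}_{d,e}$.'' An identity verified after localization (or after restriction to a torus fixed locus) only implies the identity in $\textbf{A}^{\geq}_T(c)\hat{\otimes}\textbf{A}^{\geq}_T(f)$ if the relevant localization map is injective; this is not automatic, and the paper devotes a specific argument to it (the injectivity of $K_0^T(D_{\text{sg}}(\mathcal{Y}(d)_0))\to K_0^T(D_{\text{sg}}(\mathcal{Y}(d)_0))_{\mathcal{I}}$, deduced from \cite[Proposition 2.4]{P} applied to the normal bundle of the fixed locus). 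Your proposal nowhere addresses why the localized verification suffices. Finally, for the base-change comparison itself: in the matrix-factorization setting the pushforward $q_{d,e*}$ lands in quasi-coherent matrix factorizations and only defines a class after the weight decomposition and completion of Subsection \ref{cocompl}, so the ``two push-pulls along the filtration stack'' you want to compare are a priori formal series; one must know (Proposition \ref{p2}(b)) that they are expansions of rational functions in a common region before any cancellation against the Cartan twist can be asserted. If you want to pursue the geometric route, these are the points that must be supplied; otherwise the shuffle-formula computation of the paper is the more economical path.
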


One can also state a result only involving the $\text{KHA}_T$, but then one needs a twist of the multiplication $m\boxtimes m$ in order to formulate the compatibility between $m$ and $\Delta$, see \cite{d} for the  analogous theorem for CoHA and for a definition of the twist in cohomology.

When $Q$ is the Jordan quiver, $W$ is zero, and $T=\mathbb{C}^*$, then $\textbf{A}^{\geq}_T\cong U_q(L\mathfrak{b})$, where $\mathfrak{b}$ is the Borel subalgebra of $\mathfrak{sl}_2$.
For the tripled quiver $\left(\widetilde{Q},\widetilde{W}\right)$
associated to the Jordan quiver and $T=\left(\mathbb{C}^*\right)^2$, $\textbf{A}_T^{\geq}$ has a subalgebra isomorphic to $U_{q, t}^{\geq}\left(\widehat{\widehat{\mathfrak{gl}_1}}\right)$ with the product and coproduct constructed in \cite{n1}.

\subsection{The Hopf pairing}
We assume now that we are in the case of a tripled quiver $\left( \widetilde{Q}, \widetilde{W}\right)$. The construction also works for other pairs such as $(Q,0)$, but not for all pairs $(Q,W)$ satisfying the Kunneth Assumption.

For each $e\in E(Q)$, let $\mathbb{C}^*$ act by multiplication with weight $1$ (by $q_e$) on the linear map corresponding to $e$ and by multiplication with weight $-1$ (by $q_e^{-1}$) on the linear map corresponding to $\overline{e}$. 
Let $\mathbb{C}^*_q$ act by $q$ on linear maps corresponding to edges in $\overline{Q}$ and by $q^{-2}$ on the loops $\omega_i$ for $i\in I$. Denote by $\mathbb{C}^*_t\subset \left(\mathbb{C}^*\right)^E$ the group which acts with weight $t$ on the edges on $Q$ and with weight $t^{-1}$ on the edges on $\overline{Q}$.
Consider a torus $T$ such that \begin{equation}\label{torusT}
    \mathbb{C}^*_t\times\mathbb{C}^*_q\subset T\subset (\mathbb{C}^*)^{E}\times\mathbb{C}^*_q.\end{equation}
These equivariant KHAs are called deformed KHAs in \cite{VV}. Then $\widetilde{R(d)}^T$ is a point. Let $\mathbb{F}$ be the fraction field of $K_0(BT)$. For a $K_0(BT)$-module $M$, we let $M_{\mathbb{F}}:=M\otimes_{\mathbb{K}}\mathbb{F}$. In Subsection \ref{pair}, we define a pairing
\[(\,,\,): K_0^T\left(D_{\text{sg}}\left(\widetilde{\X(d)}_0\right)\right)_{\mathbb{F}}\otimes_{\mathbb{F}} K_0^T\left(D_{\text{sg}}\left(\widetilde{\X(d)}_0\right)\right)_{\mathbb{F}}
\to 
\mathbb{F}.\] The definition is a direct generalization of the pairing used by Negu\c{t} in \cite[Exercise IV.2]{n2} for the cyclic type $A$ quiver.
The pairing $(\,,\,)$ is extended naturally to $\textbf{A}^{\leq}_{T,\mathbb{F}}\otimes_{\mathbb{F}} \textbf{A}^{\geq}_{T,\mathbb{F}}$.

\begin{thm}\label{thm2}
The pairing $(\,,\,)$ induces a non-degenerate pairing   
\[(\,,\,): \textbf{A}^{\leq}_{T,\mathbb{F}}\otimes_\mathbb{F} \textbf{A}^{\geq}_{T,\mathbb{F}}
\to 
\mathbb{F}.\]
Let $x,x'\in\textbf{A}^{\leq}_{T,\mathbb{F}}$, $y,y'\in\textbf{A}^{\geq}_{T,\mathbb{F}}$. 
Then
\begin{align*}
    (x*x',y)&=(x\otimes x', \Delta(y))\\
    (x, y*y')&=(\Delta^{\text{op}}(x), y\otimes y').
\end{align*}
\end{thm}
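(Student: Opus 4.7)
The plan is to verify the Hopf-pairing compatibility by unpacking both sides through a localization formula, then to deduce non-degeneracy from a shuffle-algebra style calculation of the Gram matrix on a distinguished spanning set. Since the torus $T$ contains $\mathbb{C}^*_t\times\mathbb{C}^*_q$, the fixed locus $\widetilde{R(d)}^T$ is a point, so equivariant localization identifies $K_0^T(D_{\text{sg}}(\widetilde{\X(d)}_0))_{\mathbb{F}}$ with rational functions in the Chern roots $z_{ij}$ (symmetric in $j$ for each $i$), and the pairing $(\,,\,)$ is realized as a residue/integration pairing weighted by the appropriate matrix-factorization Euler class. In this description the product $m_{d,e}=p_{d,e*}q_{d,e}^*$ becomes a shuffle product, while $\Delta'_{d,e}=q_{d,e*}p_{d,e}^*$, enlarged by the factor $\prod h_i(z_{ij})$, becomes a coproduct of splitting type.

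I would first prove $(x*x',y)=(x\otimes x',\Delta(y))$. The strategy is to apply base change to the correspondence
\[\X(d)\times\X(e)\xleftarrow{q_{d,e}}\X(d,e)\xrightarrow{p_{d,e}}\X(d+e),\]
so that the left side becomes the pushforward of $q_{d,e}^*(x\boxtimes x')\cdot p_{d,e}^*(y)$ against a single Euler class on $\X(d,e)$, and symmetrically the right side becomes the pushforward of $p_{d,e}^*(y)\cdot q_{d,e}^*(x\boxtimes x')$ against the Euler class of $\X(d+e)$ pulled back, times the normal bundle correction from $\X(d,e)\to\X(d+e)$. Matching these requires that the $\lambda$-weight truncation in the definition of $\Delta'_{d,e}$ produces exactly the positive-weight part of this normal bundle, while the extra factor $\prod_{j>d_i}h_i(z_{ij})$ accounts for the loop contributions supplementing the Koszul resolution on the extension stack; this is where the Kunneth Assumption \eqref{assumption} is invoked, so that both sides live in the same tensor product and the identification is valid after inverting $\mathcal{I}_{d,e}$. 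The second identity $(x,y*y')=(\Delta^{\text{op}}(x),y\otimes y')$ is entirely symmetric after interchanging $\textbf{A}^{\leq}_T$ and $\textbf{A}^{\geq}_T$ and reversing the cocharacter $\lambda$.

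For non-degeneracy, I would restrict to fixed dimension vector $d$ (the pairing is graded) and reduce to showing that if $(x,y)=0$ for all $y$ then $x=0$. Using the localization description, the pairing becomes a residue-type form between symmetric rational functions in the $z_{ij}$; by a standard argument, taking $y$ to range over a dense collection of test elements (for instance, the pure tensors $\prod z_{ij}^{n_{ij}}$ after shuffle-symmetrization, which span the relevant fraction field of symmetric functions) shows that the annihilator of $\textbf{A}^{\geq}_{T,\mathbb{F}}$ on each graded piece is zero. The extension generators $h^\pm_{i,n}$ are handled separately by checking that the pairing between $h^+_i(w)$ and $h^-_j(w')$ is given by an explicit rational function of the type appearing in the Okounkov--Smirnov Cartan, which is non-degenerate by inspection.

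The main obstacle will be the bookkeeping in paragraph two: tracking the interaction between the $\lambda$-weight truncation, the completion $\hat\otimes$, and the extra Cartan factor $\prod_{j>d_i}h_i(z_{ij})$ so that the base-change identity holds on the nose rather than up to an uncontrolled correction. A secondary difficulty is justifying the residue calculations for elements of the extended algebra, since $\Delta$ lands in a completion and one must ensure that all pairings converge (equivalently, that only finitely many terms contribute against any element of $\textbf{A}^{\leq}_{T,\mathbb{F}}$); this is guaranteed by the $\lambda$-grading argument but requires care at the interface with the $h^\pm_{i,n}$.
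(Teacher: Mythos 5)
Your proposal identifies the right framework (localization to a point, the pairing as a residue integral, shuffle formulas for $m$ and $\Delta$), but the central step you propose --- matching the two sides by base change along the correspondence $\X(d)\times\X(e)\leftarrow\X(d,e)\rightarrow\X(d+e)$ and adjunction of $p_*q^*$ against $q_*p^*$ --- is precisely the approach the paper does \emph{not} take; the author explicitly remarks that it ``would be interesting'' to prove the theorem this way, which signals that it is not known to go through. The obstruction is that the pairing $\Phi$ is not a bare pushforward to a point: it is a regularized contour integral involving the auxiliary factors $\mu_i(z_{ij}/z_{ik})=(1-p^2z^{-1})/(1-q^2z^{-1})$ with $|q|<1<|p|$ and a limit $p\to q$ at the end, and these do not arise from any functor on matrix factorizations. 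Your base-change identity would therefore hold only ``up to an uncontrolled correction,'' which is exactly the difficulty you flag in your last paragraph but do not resolve.

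The paper's actual argument has two ingredients your proposal is missing. First, it invokes Negu\c{t}'s spherical generation of $\text{KHA}_T(\widetilde{Q},\widetilde{W})_{\mathbb{F}}$ to reduce to $x,y,y'$ that are products of elements in dimensions $\varepsilon_i$; this is what controls the pole structure of $\iota^*(x)$, $\iota^*(y)$, $\iota^*(y')$ in the integrand. Second, the compatibility $(x,y*y')=(\Delta^{\text{op}}(x),y\otimes y')$ comes down to showing that the integral of the common integrand over $|z_A|=|z_B|=1$ (which computes the left side, via the shuffle formula for $m$) equals the integral over $|z_A|\gg|z_B|$ (which computes the right side, via the expansion defining $\Delta'$). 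This is done by an explicit pole analysis: as the contour is deformed, the only residues picked up are at $z_{ij}=p^{2}z_{ik}$, and these vanish in the limit $p\to q$ because of a factor $(1-q^{2}z_{ik}/z_{ij})$ in the numerator. This contour-deformation-plus-regularization argument is the crux of the proof and does not appear in your proposal; without it the interface between the completed coproduct and the residue pairing is not justified. Your non-degeneracy sketch (pairing against monomials) is close to what the paper does, and your treatment of the $h^{\pm}_i$ generators via the explicit Cartan pairing $\tau_{i'i}$ matches the paper's computation in spirit.
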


Negu\c{t} proved versions of Theorems \ref{thm1} and \ref{thm2}  for the preprojective KHA of the Jordan quiver in \cite{n1}, of the cyclic quiver of type $A$ \cite{n2}, for quivers with symmetric Cartan matrices in \cite{n4}, and for arbitrary quivers in \cite{n5}. 
An important ingredient in the proof of Theorem \ref{thm2} is the spherical generation of the preprojective KHA proved also by Negu\c{t} \cite{n3}.
Yang--Zhao \cite{yz1} proved versions of Theorems \ref{thm1} and \ref{thm2} for spherical subalgebra of preprojective Hall algebras.

Analogous to the constructions of Negu\c{t} in \cite[pages 88-89]{n2}, there are antipode maps $S:\textbf{A}_{T,\mathbb{F}}^{\geq}\to \textbf{A}_{T,\mathbb{F}}^{\geq}$, $S:\textbf{A}_{T,\mathbb{F}}^{\leq}\to \textbf{A}_{T,\mathbb{F}}^{\leq}$ which make $\textbf{A}_{T,\mathbb{F}}^{\geq}$ and $\textbf{A}_{T,\mathbb{F}}^{\leq}$ into Hopf algebras.
One can thus construct a Hopf algebra $\textbf{A}_{T,\mathbb{F}}$ as a 
Drinfeld double \cite[Subsection 1.4]{n2}.

It is interesting to see whether the representations constructed in \cite[Section 4]{P} can be extended to representations of $\textbf{A}_{T,\mathbb{F}}$. In the case of (the tripled quiver associated to) the the Jordan quiver, we expect some of those representations to be related to work of Schiffmann--Vaserot \cite{sv1} and Feigin--Tsymbaliuk \cite{ft} who constructed actions of $U_{q, t}\left(\widehat{\widehat{\mathfrak{gl}_1}}\right)$ on \[\bigoplus_{d\geq 0} K_0^T\left(\text{Hilb}(\mathbb{A}^2_{\mathbb{C}}, d)\right).\] It is also interesting to see whether there are natural categorifications of the double algebras $\textbf{A}_T$.

\subsection{Plan of the paper} In Section \ref{s2}, we discuss definitions and preliminary results. In Section \ref{s3}, we prove shuffle type formulas for $m$ and $\Delta$ and we use them to prove Theorem \ref{thm1}. We next show that the KHA for tripled quivers $\left(\widetilde{Q}, \widetilde{W}\right)$ satisfies the Kunneth Assumption. In Section \ref{s4} we define the Hopf pairing $(\,,\,)$ and prove Theorem \ref{thm2}. We end Section \ref{s4} with some explicit examples.

\subsection{Acknowledgements.} I thank the Institute of Advanced Studies for support during the preparation of the paper. I thank Henry Liu and Andrei Negu\c{t} for comments and corrections on a previous version of the paper.
This material is based upon work supported by the National Science Foundation under Grant No. DMS-1926686. I thank the referees for numerous useful suggestions that improved the paper.

\section{Preliminaries}\label{s2}

\subsection{Notations}
All stacks considered in the paper are defined over $\mathbb{C}$.

In the definition of the categorical preprojective Hall algebra, we need to use quasi-smooth schemes, a particular example of derived schemes, see \cite[Subsection 3.1.1]{T} for definitions and references on quasi-smooth schemes and stacks. 
For a stack $\X=X/G$ where $G$ is a reductive group acting on a (possibly quasi-smooth) quasi-projective scheme $X$, denote by $\text{QCoh}(\X)$ the unbounded derived category of quasi-coherent sheaves on $\X$, by $D^b\text{Coh}(\X)$ the derived category of bounded complexes of coherent sheaves, and by $\text{Perf}(\X)$ the subcategory of $D^b\text{Coh}(\X)$ of perfect complexes. All the functors used in the paper are derived and we drop $R$ and $L$ from their notations. 
Denote by 
\begin{align*}
    G_0(\X)&:=K_0\left(D^b\text{Coh}(\X)\right)_{\mathbb{Q}},\\
    K_0(\X)&:=K_0\big(\text{Perf}(\X)\big)_{\mathbb{Q}}.
\end{align*}
We denote by $H^{\text{BM}}_{\cdot}(\X)$ the Borel-Moore homology of $\X$ with rational coefficients. 
For $\X$ a quasi-smooth stack, denote by $\X^{\text{cl}}$ its classical stack. For the stacks considered in this paper, there is an isomorphism $G_0(\X)\cong G_0\left(\X^{\text{cl}}\right)$, see \cite[Equation 2.2]{VV}. We will be using the notations
\begin{align*}
    K_0^T\big(D_{\text{sg}}(\X(d)_0)\big)&:=K_0\big(D_{\text{sg},T}(\X(d)_0)\big)_{\mathbb{Q}}.\\
    K_0^T\left(\text{MF}(\X(d), W)\right)&:=K_0\left(\text{MF}_T(\X(d), W)\right)_{\mathbb{Q}}.
\end{align*}
For a regular immersion $\iota:\X\hookrightarrow \X'$, denote by $N_\iota$ the normal bundle of $\X$ in $\X'$.

All the potential appearing in the paper are induced from $\text{Tr}\,W$; we slightly abuse notation and write $W$ without mentioning the dimension vector. 
We use the notations $m$ and $*$ for the product of the KHA. 
All the tori $T$ which appear as equivariant parameters of the $\text{KHA}$ satisfy 
\begin{equation}\label{torus3}
T\subset \left(\mathbb{C}^*\right)^E\text{ and }W\text{ is }T\text{-invariant}.
\end{equation}
We also use in some situations an extra assumption \eqref{torusT} on tori $T$. 

\subsection{Categories of singularities.} 

A reference for this subsection is \cite[Subsection 2.2]{T}.
The construction of Hall algebras for quivers with potential involves categories of singularities \[D^T_{\text{sg}}(\X(d)_0):=D^b\text{Coh}_T\left(\X(d)_0\right)\big/\text{Perf}_T\left(\X(d)_0\right).\] Here, $\X(d)_0$ is the (derived) zero locus of $\text{Tr}\,W: \X(d)\to\mathbb{A}^1_\mathbb{C}$. We have an exact sequence
\begin{equation}\label{surjj}
    K_0^T(\X(d)_0)\to G_0^T(\X(d)_0)\to K_0^T\left(D_{\text{sg}}(\X(d)_0)\right)\to 0.
    \end{equation}

The category $D^T_{\text{sg}}(\X(d)_0)$ is equivalent to the category of matrix factorizations $\text{MF}_T\left(\X(d), W\right)$. The objects of $\text{MF}_T\left(\X(d), W\right)$ are $(\mathbb{Z}/2\mathbb{Z})\times T\times G(d)$-equivariant factorizations $(P, d_P)$, where $P$ is a $T\times G(d)$-equivariant coherent sheaf, $\langle 1\rangle$ is the twist corresponding to a non-trivial $\mathbb{Z}/2\mathbb{Z}$-character on $R(d)$, and \[d_P: P\to P\langle 1\rangle\] satisfies $d_P\circ d_P=\text{Tr}\,W$. There are natural pullback and proper pushforward functors for $\text{MF}$ induced by the natural pullback and proper pushforward functors on the ambient smooth stacks. We will freely switch between $D_{\text{sg}}$ and $\text{MF}$ throughout this paper.

Assume there is an extra $\mathbb{C}^*$-action corresponding to a subgroup of $\left(\mathbb{C}^*\right)^E$ such that $\text{Tr}\,W$ is of weight $2$. 
Consider the corresponding category of graded matrix factorizations $\text{MF}^{\text{gr}}(\X(d), W)$. 

For graded version of categories of singularities and for the quasi-coherent version of matrix factorizations, see \cite[Section 2.2]{T}; the only use of quasi-coherent matrix factorizations is to make sense of the functor $q_*p^*=\Delta'$ from Subsection \ref{coproduct}.

\subsection{Quivers}

\subsubsection{}
Let $Q=(I,E)$ be a quiver and let $d\in\mathbb{N}^I$. We denote by $\X(d)=R(d)/G(d)$ the stack of representations of $Q$ of dimension $d$. Fix maximal torus and Borel subgroups $T(d)\subset B(d)\subset G(d)$. We use the convention that the weights of the Lie algebra of $B(d)$ are negative; it determines a dominant chamber of weights of $G(d)$. For dimension vectors $a$ and $b$ with sum $d$, let $\lambda:\mathbb{C}^*\to G(d)$ be a fixed antidominant cocharacter corresponding to the partition $(a, b)$ of $d$. Consider the diagram of attracting loci for $\lambda$:
\[\X(a)\times\X(d)\cong\X(d)^\lambda\xleftarrow{q_{a,b}}\X(d)^{\lambda\geq 0}=\X(a,b)\xrightarrow{p_{a,b}}\X(d).\]
We may use the notations $p_\lambda, q_\lambda$ or $p, q$ depending on the context.

Let $W$ be a potential of $Q$ and let $T$ be a torus satisfying \eqref{torus3}. Then the $K$-theoretic Hall algebra of $(Q,W)$ and $T$ is
\[\text{KHA}_T(Q,W):=\bigoplus_{d\in\mathbb{N}^I}K_0^T\big(D_{\text{sg}}(\X(d)_0)\big).\]
The multiplication on Hall algebra \eqref{productHA} is induced by the functor
$p_{a,b*}q^*_{a,b}$ on the corresponding categories of singularities. 


\subsubsection{}\label{localtorus}
Let $d\in\mathbb{N}^I$ and let $T$ be a torus satisfying \eqref{torus3}. 
Consider the stacks $\mathcal{Y}(d):=R(d)/T(d)$. 
The map $a:\mathcal{Y}(d)\to \X(d)$, base change of $BT(d)\to BG(d)$, induces an injection 
\[a^*: K_0^T(D_{\text{sg}}(\mathcal{X}(d)_0))\hookrightarrow K_0^T\left(D_{\text{sg}}\left(\mathcal{Y}(d)_0\right)\right),\]
see \cite[Proposition 2.3]{P}.

\subsubsection{}\label{Phi}

Let $d\in\mathbb{N}^I$ and let $\lambda$ be a cocharacter of $G(d)$. Consider the natural maps
\[\mathcal{Y}(d)^\lambda:=R(d)^\lambda/T(d)\xrightarrow{s_\lambda} \mathcal{Y}(d)^{\lambda\geq 0}:=R(d)^{\lambda\geq 0}/T(d)\xrightarrow{p_\lambda} \mathcal{Y}(d)=R(d)/T(d).\]
We abuse notation and write $p_\lambda$ for the stacks $\mathcal{Y}$.
Define the map
\[\Phi_\lambda:=s_\lambda^*p_\lambda^*:K_0^T\left(D_{\text{sg}}(\mathcal{Y}(d)_0)\right)\to  K_0^T\left(D_{\text{sg}}\left(\mathcal{Y}(d)^\lambda_0\right)\right).\]
There are analogous maps denoted by the same letters
\[\X(d)^\lambda\xrightarrow{s_\lambda}\X(d)^{\lambda\geq 0}\xrightarrow{p_\lambda}\X(d).\]
The following diagram commutes:
\begin{equation}\label{diagram}
    \begin{tikzcd}
    K_0^T\left(D_{\text{sg}}(\X(d)_0)\right)\arrow[d, hook]\arrow[r,"\Phi_\lambda"]&
    K_0^T\left(D_{\text{sg}}(\X(d)^\lambda_0)\right)\arrow[d, hook]\\
    K_0^T\left(D_{\text{sg}}(\mathcal{Y}(d)_0)\right)\arrow[r,"\Phi_\lambda"]&K_0^T\left(D_{\text{sg}}(\mathcal{Y}(d)^\lambda_0)\right).
    \end{tikzcd}
\end{equation}

If $\lambda$ is the antidominant cocharacter corresponding to the partition $(a,b)$ of $d$, we may write $\Phi_{a,b}$ instead of $\Phi_\lambda$. Further, if $\lambda$ acts with weight $w$ on coordinates $z_{ij}$ parametrized by $A=(A_i)_{i\in I}$ for $A_i\subset \{1,\cdots, d_i\}$ and with a different weight $v\neq w$ on coordinates parametrized by its complement $B=(B_i)_{i\in I}$, we use the notation $\Phi_{A,B}$.

\subsection{The Köszul equivalence}\label{isikequi}
Let $X$ be a smooth affine scheme with an action of a reductive group $G$ and let $E$ be a $G$-equivariant vector bundle on $X$. Let $\mathbb{C}^*$ act on the fibers of $E$ with weight $2$ and consider $s\in \Gamma(X, E)$ a section of $E$ of weight $2$.
It induces a map $\partial: E^{\vee}\to \mathcal{O}_X$.
Define the $G$-equivariant regular function \[w:Y:=\text{Tot}_X\left(E^{\vee}\right)\to\mathbb{A}^1_\mathbb{C}\] by the formula
$w(x,v)=\langle s(x), v \rangle$ for $x\in X(\mathbb{C})$ and $v\in E^{\vee}|_x$.
Consider the Köszul stack
\[\mathfrak{K}:=\text{Spec}\left(\mathcal{O}_X\left[E^{\vee}[1];\partial\right]\right)\big/G.\]
Consider the category of graded matrix factorizations $\text{MF}^{\text{gr}}\left(Y/G, w\right)$ with respect to the group $\mathbb{C}^*$ mentioned above. The Köszul equivalence due to Isik \cite{I} is
\begin{equation}\label{koszu}
    \text{MF}^{\text{gr}}\left(Y/G, w\right)\cong D^b(\mathfrak{K}).
    \end{equation}
By \cite[Corollary 3.13]{T5} and \eqref{koszu}, we have that
\begin{equation}\label{grnotgr}
    K_0\left(\text{MF}(Y/G, w)\right)\cong
    K_0\left(\text{MF}^{\text{gr}}(Y/G, w)\right)\cong G_0\left(\mathfrak{K}\right).
\end{equation}

\subsection{The tripled quiver}
\label{trqu}
Let $Q=(I,E)$ be a quiver with vertices set $I$ and edges set $E$. Let $s,t:E\to I$ be the source and target maps. For $e\in E$, denote by $\overline{e}$ the edge with opposite orientation and let $\overline{E}=\{\overline{e}|\,e\in E\}$. For $i\in I$, let $\omega_i$ be a loop at $i$. Let $Q^d=(I,E^d)$ be the doubled quiver, where $E^d=E\sqcup \overline{E}$. Consider the tripled quiver $\widetilde{Q}=\left(I, \widetilde{E}\right)$, where $\widetilde{E}=\{f, \omega_i|\,f\in E^d, i\in I\}$, with potential $\widetilde{W}:=\sum_{e\in E}\omega_{s(e)}\,
[e, \overline{e}]$. 
\\


For $d\in\mathbb{N}^I$, let $\widetilde{\X(d)}$ be the stack of representations of $\widetilde{Q}$ of dimension $d$ and let $\mathfrak{P}(d)$ be the (derived) stack of representations of the preprojective algebra of $Q$ of dimension $d$ \cite{VV}, \cite[Subsection 3.2]{P}. By the Köszul equivalence
from Subsection \ref{isikequi}, see also loc. cit.: \begin{equation}\label{preequi}
\text{MF}^{\text{gr}}_{T}\left(\widetilde{\X(d)}, \widetilde{W}\right)\cong D^b_T\left(\mathfrak{P}(d)\right).\end{equation}
The preprojective categorical Hall algebra of $Q$ is the monoidal category
\[\text{HA}_T(Q):=\bigoplus_{d\in\mathbb{N}^I}D^b_T\left(\mathfrak{P}(d)\right),\]
see \cite{VV} for the definition of the product. Its Grothendieck group is called the preprojective KHA of $Q$.
The categories $\text{MF}^{\text{gr}}_{T}\left(\widetilde{\X(d)}, \widetilde{W}\right)$ and $\text{MF}_{T}\left(\widetilde{\X(d)}, \widetilde{W}\right)$ have the same Grothendieck group, see \cite[Corollary 3.13]{T5}, and thus \[\text{KHA}_T\left(\widetilde{Q}, \widetilde{W}\right)\cong\text{KHA}_T(Q)\] 
as vector spaces. The two algebras have the same product up to conjugation by an explicit equivariant element, see \cite[Subsection 3.2.3]{P}, \cite[Subsection 2.3.7]{VV}.

\subsection{The coproduct for KHA}\label{comple}
We explain why $\Delta_{d,e}'$ defined in Subsection \ref{coproduct} induces the map \eqref{cp}. In the setting of Subsection \ref{coproduct},
there is an orthogonal decomposition
\[
\text{MF}_{\text{qcoh}, T}\left(\X(d)\times\X(e), W\right)\cong\bigoplus_{w\in\mathbb{Z}}\text{MF}_{\text{qcoh}, T}\left(\X(d)\times\X(e), W\right)_w,
\]
where the subscript $w$ denotes the category of matrix factorizations on which $\lambda$ acts with weight $w$. The analogous decomposition holds for coherent matrix factorizations. 
There are thus functors \[\beta_w:\text{MF}_{\text{qcoh}, T}\left(\X(d)\times\X(e), W\right)_{\text{above}}\to \text{MF}_T\left(\X(d)\times\X(e), W\right)_{w}\] which associate to a matrix factorization $\mathcal{F}$ its $w$-weight component with respect to $\lambda$. 
Let $\mathcal{F}\in \text{MF}_T\left(\X(d+e), W\right)$. Then $q_*p^*(\mathcal{F})$ is in $\text{MF}_{\text{qcoh}, T}\left(\X(d)\times\X(e), W\right)_{\text{above}}$, see for example \cite[Lemma 2.2.3]{T}.
The formula for $\Delta'_{d,e}$ is:
\[\Delta'_{d,e}(\mathcal{F})=\sum_{w\in \mathbb{Z}} \left[\beta_w q_*p^*(\mathcal{F})\right]\in K_0\Big(\text{MF}_{\text{qcoh}, T}\left(\X(d)\times\X(e), W\right)_{\text{above}}\Big).\]
Then $\Delta'_{d,e}(\mathcal{F})$ is in $K_0\Big(\text{MF}_T\left(\X(d)\times\X(e), W\right)\Big)[\![q_\varepsilon^{-1}z_{i'j'}^{-1}z_{ij}]\!]$, where the monomials in the power series have $j\leq d_i$, $j'>d_{i'}$, and $q_\varepsilon$ is a weight of $T$ associated to an edge $\varepsilon$ from $i'$ to $i$.
Proposition \ref{p2}, part (b) says that this sum is the expansion of a rational function in $K_0\big(\text{MF}_T(\X(d)\times \X(e), W)\big)_{\mathcal{I}_{d,e}}$ in the region $|z_{ij}|\ll |z_{i'j'}|$ for $j\leq d_i$, $j'>d_{i'}$.

\section{The product-coproduct compatibility}\label{s3}

\subsection{Shuffle formulas for product and coproduct}\label{sh}

Let $i,i'$ be vertices and let $\{1,\cdots, \varepsilon(i,i')\}$ be the set of edges from $i$ to $i'$. Consider the action of $\left(\mathbb{C}^*\right)^{\varepsilon(i,i')}$ on $R(d)$ whose $j$th copy acts on $R(d)$ with weight $1$ on the factor $\text{Hom}\left(\mathbb{C}^{d_{s(j)}}, \mathbb{C}^{d_{t(j)}}\right)$ corresponding to the edge $j$. Denote by $q_j$ the weight corresponding to the $j$th copy of $\mathbb{C}^*$. 
Define
\begin{equation}\label{zeta}
    \zeta_{ii'}(z):= \frac{\left(1-q_1^{-1}z^{-1}\right)\cdots \left(1-q_{\varepsilon(i,i')}^{-1}z^{-1}\right)}{\left(1-z^{-1}\right)^{\delta_{ii'}}}, \end{equation}
where $\delta_{ii'}$ is $1$ if $i=i'$ and $0$ otherwise. 
Let $T$ be a subtorus of $\left(\mathbb{C}^*\right)^E$ which fixes $W$. We also use the notation $q_j$ for the corresponding weight of $T$. Recall the definition of $\Phi_{a,b}$ from Subsection \ref{Phi}.

\begin{prop}\label{p2}
Let $a,b\in\mathbb{N}^I$ with sum $d$. 

(a) Let $x\in K_0^T(D_{\text{sg}}(\X(a)_0))$ and $y\in K_0^T(D_{\text{sg}}(\X(b)_0))$. Then 
\[\Phi_{a,b}\,m_{a,b}(x,y)=\sum_{w\in\mathfrak{S}_{d}/\mathfrak{S}_a\times\mathfrak{S}_b} w\left(xy\prod_{\substack{i,i'\in I\\ j\leq a_i\\ j'>a_{i'}}}\zeta_{ii'}\left(\frac{z_{ij}}{z_{i'j'}}\right)
\right).\]
(b) Assume that $(Q,W)$ and $T$ satisfy the Kunneth Assumption \eqref{assumption}. Let $z$ be in $K_0^T\big(D_{\text{sg}}(\X(d)_0)\big)$. Then
\[\Delta'_{a,b}(z)=\frac{\Phi_{a,b}(z)}{\prod_{\substack{i,i'\in I\\ j>a_i\\ j'\leq a_{i'}}}
\zeta_{ii'}\left(\frac{z_{ij}}{z_{i'j'}}\right)}.\]
\end{prop}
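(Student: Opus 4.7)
The strategy is to prove both parts by reducing to the torus-level stacks $\mathcal{Y}(\bullet)=R(\bullet)/T(\bullet)$ via the commutative diagram \eqref{diagram} and the injection $a^*$ from Subsection~\ref{localtorus}, where the attracting diagram is transparent: the map $p_\lambda:\mathcal{Y}(a,b)\to\mathcal{Y}(d)$ is the closed immersion induced by $R(a,b)\hookrightarrow R(d)$, while $q_\lambda:\mathcal{Y}(a,b)\to \mathcal{Y}(a)\times\mathcal{Y}(b)$ is a vector bundle projection with zero section $s_\lambda$, whose fiber is the strictly positive $\lambda$-weight part of $R(d)$. The $T\times T(d)$-weights of this fiber (matrix entries of weight $q_\varepsilon z_{i'j'}/z_{ij}$ for an edge $\varepsilon:i\to i'$ and appropriate indices) will produce the $\zeta$-factors.

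For part~(a), I would first observe that $q_{a,b}^*$ in $m_{a,b}=p_{a,b*}q_{a,b}^*$ is a Thom isomorphism for the vector bundle $q_{a,b}$, so $q_{a,b}^*(x\boxtimes y)=xy$ in $K_0^T(D_{\text{sg}}(\X(a,b)_0))$. Next, $p_{a,b*}$ is the parabolic induction $K_0^T(BP(a,b))\to K_0^T(BG(d))$ whose geometric fiber is the partial flag variety $G(d)/P(a,b)$; K-theoretic localization on this fiber decomposes the pushforward as a sum over the $T(d)$-fixed points, indexed by the Weyl-coset representatives $w\in \mathfrak{S}_d/\mathfrak{S}_a\times\mathfrak{S}_b$, each summand weighted by the K-theoretic Euler class of the corresponding normal bundle. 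A direct calculation in the weights $q_\varepsilon z_{i'j'}/z_{ij}$ identifies this Euler class with $\prod_{j\leq a_i,\, j'>a_{i'}}\zeta_{ii'}(z_{ij}/z_{i'j'})$. Finally, $\Phi_{a,b}=s_\lambda^*p_\lambda^*$ acts on K-theory as the natural inclusion of $\mathfrak{S}_d$-symmetric functions into $\mathfrak{S}_a\times\mathfrak{S}_b$-symmetric ones (since $p_\lambda^*$ is restriction of characters along the Levi $L(a,b)\subset G(d)$, and $s_\lambda^*$ is a Thom isomorphism), which combined with the pushforward formula gives the claimed shuffle identity.

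For part~(b), I would use the following identity for a vector bundle projection $\pi:V\to X$ with zero section $s$ and normal bundle $N_s$: in the localized/completed equivariant K-theory,
\[
\pi_*=\frac{s^*}{\Lambda_{-1}(N_s^\vee)},
\]
which follows from $s^*\pi^*=\operatorname{id}$, the projection formula, and the formal computation $\pi_*(\mathcal{O}_V)=\prod_i(1-w_i^{-1})^{-1}$ where the $w_i$ are the torus weights of the fiber. Applying this to $\Delta'_{a,b}=q_{\lambda*}p_\lambda^*$ (using $q_{a,b}=q_\lambda$ and $p_{a,b}=p_\lambda$ in the attracting diagram) yields
\[
\Delta'_{a,b}(z)=\frac{s_\lambda^*p_\lambda^*(z)}{\Lambda_{-1}(N_{s_\lambda}^\vee)}=\frac{\Phi_{a,b}(z)}{\Lambda_{-1}(N_{s_\lambda}^\vee)},
\]
and reading off the weights of $N_{s_\lambda}$ identifies $\Lambda_{-1}(N_{s_\lambda}^\vee)$ with the prescribed $\zeta$-product in the denominator. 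The Kunneth Assumption~\eqref{assumption} enters here to make sense of both sides in the common target $K_0^T(D_{\text{sg}}(\X(a)_0))\hat{\otimes} K_0^T(D_{\text{sg}}(\X(b)_0))$.

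The main obstacle will be in part~(b): since $q_\lambda$ is not proper, the pushforward $q_{\lambda*}$ is not defined on bounded K-theory and has to be interpreted within quasi-coherent matrix factorizations with $\lambda$-weights bounded above, as in Subsection~\ref{comple}; the resulting infinite formal sum of $\lambda$-weight components must then be recognized as the expansion of the rational function $\Phi_{a,b}(z)/\prod\zeta$ in the localized ring $K_0^T(\text{MF}(\X(a)\times \X(b),W))_{\mathcal{I}_{a,b}}$. A secondary but delicate task is the careful bookkeeping of sign and weight conventions (orientation of $\lambda$, source and target of each edge, and normalization of $\zeta$) so that the K-theoretic Euler classes in both parts match the prescribed $\zeta$-products on the nose, rather than up to a monomial twist.
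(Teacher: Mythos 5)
Your outline reproduces the correct shape of the answer, but two steps as written would not go through. The more serious one is in part (b): you treat $q_{a,b}\colon\X(a,b)\to\X(a)\times\X(b)$ as a vector bundle projection and compute $q_{a,b*}$ as $s_\lambda^*/\Lambda_{-1}(N_{s_\lambda}^{\vee})$ with $N_{s_\lambda}=R(a,b)^{\lambda>0}$. But $q_{a,b}$ is not a vector bundle of stacks: its source carries the $G(a,b)$-quotient while its target carries the $G(a)\times G(b)$-quotient, so the fiber is $R(a,b)^{\lambda>0}$ modulo the unipotent radical $U=G(a,b)^{\lambda>0}$. The honest vector-bundle computation produces only $\prod\left(1-q_{\varepsilon}^{-1}z_{ij}^{-1}z_{i'j'}\right)$ in the denominator, i.e.\ the numerators of $\zeta$; the factors $\left(1-z^{-1}\right)^{\delta_{ii'}}$ in the denominator of $\zeta$ --- equivalently the numerator $\prod\left(1-z_{ij}^{-1}z_{ik}\right)$ of $1/\prod\zeta$ --- arise precisely from the pushforward along the $BU$-direction, which contributes $\Lambda^{\bullet}\mathfrak{u}^{\vee}$. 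This is why the paper factors $q_{a,b}$ as $r_{a,b}$ followed by $j_{a,b}$ and records $j_{a,b*}(u)=\omega_{a,b}^*(u)\prod\left(1-z_{ij}^{-1}z_{ik}\right)$ in \eqref{3}. Your proposed reduction to the torus-level stacks $\mathcal{Y}(\bullet)$ does not rescue this: diagram \eqref{diagram} only concerns $\Phi_\lambda$, and the squares relating the $\X$-level and $\mathcal{Y}$-level \emph{pushforwards} are not Cartesian, so the missing factor does not appear. As written your formula is off by $\prod\left(1-z_{ij}^{-1}z_{ik}\right)$; this is a structural omission, not a normalization to be fixed by bookkeeping.

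In part (a) there is a smaller but genuine issue of the same flavor, plus a foundational one. You compute the flag-bundle pushforward by localization at the $T(d)$-fixed points of the fiber; that argument yields the Weyl-sum formula only after inverting the relevant Euler classes, whereas the claimed identity lives in non-localized $K_0^T(D_{\text{sg}})$ and there is no a priori injectivity of these groups into their localizations. The paper instead proves the integral statement (Proposition \ref{refe}) by induction on the support of the class, using Kapranov's relative exceptional collection to reduce to the Weyl character formula over a point, and it separates the closed immersion $\iota_{a,b}$ (whose self-intersection gives the edge factors, computed on matrix factorizations and transported to $D_{\text{sg}}$ via the surjection \eqref{surjj}) from the smooth proper flag bundle $\pi_{a,b}$ (which gives the symmetrization and the $\left(1-z_{ij}^{-1}z_{ik}\right)$ denominators). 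Your single ``Euler class of the corresponding normal bundle'' conflates these two contributions, one of which sits in the numerator of $\zeta$ and the other in its denominator; to make the argument complete you need both the integral pushforward formula and the explicit factorization of $p_{a,b}$.
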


\begin{proof} Consider the diagonal action of $T$ on $\X(a)\times \X(b)$. We are using the notations from Subsection \ref{Phi}.

(a) The map $q_{a,b}:\X(a,b)\to\X(a)\times\X(b)$ is an affine bundle map.  Its restriction to the zero locus is also an affine bundle map, so there is a pullback map $q^*_{a,b}$ in $K$ and $G$-theory and it is an isomorphism. 
Further, $q^*_{a,b}\left(\text{Tr}\,W_a+\text{Tr}\,W_b\right)=p^*_{a,b}\left(\text{Tr}\,W_{d}\right)$, so there are maps
\begin{align*}
    q^*_{a,b}&: G_0^T\Big(\left(\X(a)\times\X(b)\right)_0\Big)
    \cong G_0^T(\X(a,b)_0)\\
    q^*_{a,b}&: K_0^T\Big(D_{\text{sg}}\left((\X(a)\times\X(b)\right)_0 \Big)
    \cong K_0^T(D_{\text{sg}}(\X(a,b)_0)).
\end{align*}
Consider the factorizations of $p_{a,b}$:
\begin{align*}
&R(a,b)/G(a,b)\xrightarrow{\iota'_{a,b}} R(d)/G(a,b)\xrightarrow{\pi'_{a,b}} R(d)/G(d)\\
&R(a,b)_0/G(a,b)\xrightarrow{\iota_{a,b}} R(d)_0/G(a,b)\xrightarrow{\pi_{a,b}} R(d)_0/G(d).
\end{align*}
We have that $N_{\iota'_{a,b}}\big|_{\X(a,b)_0}=N_{\iota_{a,b}}$. The weights of the bundle $N_{\iota'_{a,b}}^{\vee}$ are of the form $q^{-1}_{\varepsilon}z_{ij}^{-1}z_{i'j'}$, where $j\leq a_i$, $j'>a_{i'}$, and $\varepsilon\in\{1,\cdots,\varepsilon(i,i')\}$ is an edge between $i$ and $i'$. We have that \begin{align*}
    \iota_{a,b}^*\iota_{a,b*}: \text{MF}_T(\X(a,b), W)&\to \text{MF}_T(\X(a,b), W)\\
    (P, d_P)&\mapsto \left(\iota_{a,b}^*\iota_{a,b*}P, \iota_{a,b}^*\iota_{a,b*}d_P\right).
    \end{align*}
    We have that $\mathcal{H}^{-i}\left(\iota_{a,b}^*\iota_{a,b*}P\right)\cong \Lambda^i\left(N_{\iota_{a,b}}^{\vee}\right)\otimes P$ for $i\in\mathbb{Z}$ and the corresponding factorization is $1\otimes d_P$, so 
\[\iota_{a,b}^*\iota_{a,b*}: K_0^T\big(\text{MF}(\X(a,b), W)\big)\to K_0^T\big(\text{MF}(\X(a,b), W)\big)\]
has the formula
\begin{equation}\label{1}
\iota_{a,b}^*\iota_{a,b*}(u)=u\prod_{\substack{i,i'\in I\\ j\leq a_i\\ j'>a_{i'}}}\left(1-q_1^{-1} z_{ij}^{-1}z_{i'j'}\right)\cdots \left(1-q_{\varepsilon(i,i')}^{-1} z_{ij}^{-1}z_{i'j'}\right)
\end{equation}
for $u$ in $K_0^T(D_{\text{sg}}(\X(a,b)_0))$.
The map $\pi_{a,b}$ is smooth, so there is a pullback in $G$-theory. 
Using Proposition \ref{refe}, the maps 
\begin{align*}
    \pi_{a,b}^*\pi_{a,b*}: G_0^T\left(R(d)_0/G(a,b)\right)&\to G_0^T\left(R(d)_0/G(a,b)\right)\\
    \pi_{a,b}^*\pi_{a,b*}: K_0^T\Big(D_{\text{sg}}(R(d)_0/G(a,b))\Big)&\to K_0^T\Big(D_{\text{sg}}(R(d)_0/G(a,b))\Big)
\end{align*}
have the following formulas, for example for $v$ in $K_0^T\Big(D_{\text{sg}}(R(d)_0/G(a,b))\Big)$:
\begin{equation}\label{2}
\pi_{a,b}^*\pi_{a,b*}(v)=\sum_{w\in\mathfrak{S}_{d}/\mathfrak{S}_a\times\mathfrak{S}_b} w\left(\frac{v}{\prod_{\substack{i\in I\\ j\leq a_i<k}}\left(1-z_{ij}^{-1}z_{ik}\right)}\right).
\end{equation}
Note that Proposition \ref{refe} has a formula for $G_0^T\left(R(d)_0/G(a,b)\right)$, but the formula for $K_0^T\Big(D_{\text{sg}}(R(d)_0/G(a,b))\Big)$ follows using the surjection \eqref{surjj}.
Further, $\iota_{a,b}^*$ and the action of $\mathfrak{S}_d$ commute. 
The formula for $m_{a,b}$ follows from \eqref{1} and \eqref{2}.

(b) Consider the factorization of $q_{a,b}$: 
\[R(a,b)_0/G(a,b)\xrightarrow{r_{a,b}}\left(R(a)\times R(b)\right)_0/G(a, b)\xrightarrow{j_{a,b}}\left(R(a)\times R(b)\right)_0/G(a)\times G(b).\]
Let $\omega_{a,b}: \left(R(a)\times R(b)\right)_0/G(a)\times G(b)\to \left(R(a)\times R(b)\right)_0/G(a,b)$ be the map induced by the inclusion $G(a)\times G(b)\subset G(a,b)$.
The map $r_{a,b}$ has a natural section $\tau_{a,b}$.
There are induced maps
\begin{align*}
    j_{a,b*}: G_0^T(\X(a,b)_0)&\to G_0^T\left(R(a,b)_0/G(a)\times G(b)\right)\\
    j_{a,b*}: K_0^T(D_{\text{sg}}(\X(a,b)_0))&\to K_0^T\Big(D_{\text{sg}}(R(a,b)_0/G(a)\times G(b))\Big).
\end{align*}
The map $\omega_{a,b}$ is smooth, so there is a pullback map in $G$-theory.
The map $j_{a,b*}$ has the following formula, for example for $u$ in $G_0^T(\X(a,b)_0)$:
\begin{equation}\label{3}
j_{a,b*}(u)=\omega_{a,b}^*(u)\prod_{\substack{i\in I\\ k\leq a_i<j}}\left(1-z_{ij}^{-1}z_{ik}\right).
\end{equation}
Finally, $r_{a,b}$ is the restriction to the zero locus of the affine bundle map \[r'_{a,b}:R(a,b)\big/G(a)\times G(b)\to R(a)\times R(b)\big/G(a)\times G(b).\] 
The pullback $r^*_{a,b}$ exists and is an isomorphism in $K$ and $G$-theory.
Let $\lambda$ be a cocharacter of $G(a)\times G(b)$ which acts with non-negative weights on $R(a,b)$ and has fixed locus $R(a)\times R(b)$. Similarly to the discussion in Subsection \ref{comple},   $r_{a,b*}$ has image in
\begin{multline*}
r_{a,b*}:
K_0^T\Big(D_{\text{sg}}(R(a,b)_0/G(a)\times G(b))\Big)
\to\\
K_0^T\Big(D_{\text{sg}}(\left(R(a)\times R(b)\right)_0/G(a)\times G(b))\Big)[\![q_\varepsilon^{-1}z_{i'j'}^{-1}z_{ij}]\!],
\end{multline*} where the monomials in the power series have $j\leq a_i$, $j'>a_{i'}$.
Let
$N:=R(a,b)^{\lambda>0}$ be a representation of $G(a)\times G(b)$.
For a pair $(\mathcal{F},d)$ in $\text{MF}_T\big(\X(a)\times \X(b), W\big)$, we have that \[r_{a,b*}r_{a,b}^*\left(\mathcal{F}\right)=\mathcal{F}\otimes \mathbb{C}[N^{\vee}]\] and $r_{a,b*}r_{a,b}^*(d)=d\otimes \text{id}$. 
The map $\tau_{a,b}^*$ is an inverse to $r_{a,b}^*$. 
Consider $v$ in $K_0^T\big(\text{MF}\big(R(a,b)\big/G(a)\times G(b), W\big)\big)$. Then
\begin{equation}\label{4}
r_{a,b*}(v)=\frac{\tau_{a,b}^*(v)}{\prod_{\substack{i,i'\in I\\ j>a_i\\ j'\leq a_{i'}}}\left(1-q_1^{-1}z_{ij}^{-1}z_{i'j'}\right)\cdots\left(1-q_{\varepsilon(i,i')}^{-1}z_{ij}^{-1}z_{i'j'}\right)}.
\end{equation}
Consider the action of $T\times T$ on $\X(a)\times \X(b)$ induced by the actions of $T$ on both factors. By the Kunneth Assumption \eqref{assumption},
we have that
\[
K_0^{T\times T}\Big(D_{\text{sg}}((\X(a)\times\X(b))_0)\Big)\cong K_0^T(D_{\text{sg}}(\X(a)_0))\otimes
K_0^T(D_{\text{sg}}(\X(b)_0)).\]
The map $r_{a,b*}$ has image in 
$K_0^T\left(D_{\text{sg}}(\X(a)_0)\right)\hat{\otimes} K_0^T\left(D_{\text{sg}}(\X(b)_0)\right),$
where the completion on the right hand side is defined in Subsection \ref{comple}, see also Subsection \ref{cocompl}. 
The formula for $\Delta'_{a,b}$ follows by combining \eqref{3}, \eqref{4}, and $\Phi_{a,b}=p^*_{a,b}\omega^*_{a,b}\tau^*_{a,b}$.
\end{proof}

To conclude \eqref{2}, it suffices to show:

\begin{prop}\label{refe}
Let $a, b, d\in \mathbb{N}^I$ be such that $a+b=d$. Let $P:=G(a,b)$ be the parabolic group of $G:=G(d)$ corresponding to the partition $a+b=d$. Let $X$ be any variety with an action of $G$ and consider the smooth projection map \[\pi: X/P\cong X\times_P G/G\to X/G.\] Let $y\in G_0(X/P)$. Then \[\pi^*\pi_{*}(y)=\sum_{w\in\mathfrak{S}_{d}/\mathfrak{S}_a\times\mathfrak{S}_b} w\left(\frac{y}{\prod_{\substack{i\in I\\ j\leq a_i<k}}\left(1-z_{ij}^{-1}z_{ik}\right)}\right).\]
\end{prop}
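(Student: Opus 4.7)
The plan is to deduce the identity from the classical Weyl/flag-bundle pushforward formula in equivariant $G$-theory, by factoring $\pi$ through the full flag bundle.

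Fix $T = T(d) \subset B = B(d) \subset P$ and consider the tower
\[
X/T \xrightarrow{\alpha} X/B \xrightarrow{\beta} X/P \xrightarrow{\pi} X/G.
\]
The map $\alpha$ is an affine bundle (so $\alpha^*$ is an isomorphism on $G$-theory), while $\beta$ is a flag bundle whose fiber $L/B_L$ (the full flag of the Levi) satisfies $\chi(L/B_L,\mathcal O) = 1$, hence $\beta_*\beta^* = \mathrm{id}$. Consequently $(\beta\alpha)_*(\beta\alpha)^* = \mathrm{id}$ on $G_0(X/P)$, and the splitting principle for flag bundles realizes $(\beta\alpha)^*$ as the embedding of $G_0(X/P)$ onto the $W_P = \prod_i(\mathfrak S_{a_i}\times\mathfrak S_{b_i})$-invariant subspace of $G_0(X/T) \cong G_0(X/B)$.

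For the composite $\widetilde\pi := \pi\beta\alpha: X/T \to X/G$, which is a smooth proper $G/B$-bundle, equivariant Atiyah--Bott localization on the cellular fiber yields
\[
(\beta\alpha)^*\pi^*\widetilde\pi_*(f) \;=\; \sum_{w \in \mathfrak S_d} w\cdot \frac{f}{\prod_{i \in I,\;1 \leq j < k \leq d_i}(1 - z_{ij}^{-1} z_{ik})}, \qquad f \in G_0(X/T).
\]
Applying this to $f = (\beta\alpha)^*(y)$ for $y \in G_0(X/P)$, the identity $(\beta\alpha)_*(\beta\alpha)^* = \mathrm{id}$ reduces the left-hand side to $(\beta\alpha)^*\pi^*\pi_*(y)$. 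By $W_P$-invariance of $f$ the right-hand side decomposes over cosets $\mathfrak S_d/W_P$: each coset contributes
\[
w \cdot \frac{y}{\prod_{j\leq a_i < k}(1 - z_{ij}^{-1}z_{ik})} \cdot \Bigl(\sum_{u \in W_P} u\cdot \frac{1}{\prod_{\alpha \in R^+_L}(1-e^{-\alpha})}\Bigr),
\]
where $R^+_L$ denotes the positive roots of the Levi, and the inner $W_P$-sum equals $1$ by the Weyl formula applied to $L/B_L$ (equivalently, $\chi(L/B_L,\mathcal O) = 1$). Since $(\beta\alpha)^*$ is injective, the claimed identity for $\pi^*\pi_*(y)$ follows.

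The main obstacle is combinatorial bookkeeping — matching sign conventions so that the positive roots of $B$ correspond to the weights $z_{ij}z_{ik}^{-1}$ with $j < k$ in each copy of $GL_{d_i}$, and verifying the clean factorization of the full-flag denominator into a Levi piece (the positive roots of $L$) and an ``off-diagonal'' piece (the roots of the unipotent radical of $P$), giving exactly $\prod_{j\leq a_i < k}(1 - z_{ij}^{-1}z_{ik})$. No input from the derived or matrix-factorization framework of the paper is needed, as the proposition concerns only classical $G$-theory of a smooth flag-bundle map.
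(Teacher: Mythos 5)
Your overall strategy --- reduce the two-block parabolic to the Borel case via the tower $X/T\to X/B\to X/P\to X/G$, then recover the coset sum from the full Weyl-group sum using $W_P$-invariance and the identity $\sum_{u\in W_P}u\bigl(1/\prod_{\alpha\in R^+_L}(1-e^{-\alpha})\bigr)=1$ --- is coherent and genuinely different from the paper's argument. The combinatorial bookkeeping you flag does go through: the full-flag denominator factors into a Levi piece and the off-diagonal piece $\prod_{j\leq a_i<k}(1-z_{ij}^{-1}z_{ik})$, the latter is $W_P$-invariant, and the inner $W_P$-sum collapses to $1$ by the Weyl character formula for $L/B_L$. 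The statements that $\alpha^*$ is an isomorphism, that $\beta_*\beta^*=\mathrm{id}$, and that $(\beta\alpha)^*$ is injective are also fine.

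The gap is the key input: the full-flag pushforward formula $\widetilde\pi^*\widetilde\pi_*(f)=\sum_{w\in\mathfrak S_d}w\bigl(f/\prod_{j<k}(1-z_{ij}^{-1}z_{ik})\bigr)$ over an \emph{arbitrary} (possibly singular) base. You justify it by ``equivariant Atiyah--Bott localization on the cellular fiber,'' but localization only computes such pushforwards after inverting the Euler-class factors $1-z_{ij}^{-1}z_{ik}$, whereas the proposition is an integral identity in $G_0(X/P)$ (the right-hand side is a genuine class after summing, not merely a localized one), and for a general variety $X$ the localization map $G_0(X/T)\to G_0(X/T)_{\mathcal{I}}$ need not be injective. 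Making your route rigorous requires either a generation statement such as Merkurjev's theorem that $G_0(X/T)\cong G_0(X/G)\otimes_{R(G)}R(T)$ for $G$ a product of general linear groups --- after which the projection formula reduces everything to the Weyl character formula at a point --- or an argument of the kind the paper actually gives: Noetherian induction on $\dim X$ combined with Kapranov's relative exceptional collection for the Grassmannian bundle, which reduces to classes $(\pi^*\mathcal{E})\otimes\mathcal{V}$ and then, by d\'evissage on supports and generic triviality of $\mathcal{E}$, to the point case. In other words, the hardest part of the proposition --- propagating the Weyl character formula from a point to an arbitrary base --- is exactly the step you have asserted rather than proven.
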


\begin{proof}
First, the pullback $\pi^*$ exists because $\pi$ is smooth. The map $\pi$ is a relative Grassmannian.
We prove the statement using induction on the dimension of $X$. The statement for $X=\text{Spec}(\mathbb{C})$ is known, for example it follows from the Weyl character formula, see also \cite[Proposition 1.2]{yz2}. 

Assume that $X$ has dimension $n>0$. We may assume that $X$ is irreducible. 
By the relative version \cite[Subsection 3.7]{K2} of Kapranov's exceptional collection for the Grassmannian \cite{K1}, there exists a semi-orthogonal decomposition of $D^b(X\times_P G/G)$ with summands $\left(\pi^* D^b(X/G)\right)\otimes \mathcal{V}_i$ for $i$ in a finite set $B$, where $\mathcal{V}_i$ are equivariant vector bundles. It suffices to check the statement for sheaves $\left(\pi^*\mathcal{E}\right)\otimes \mathcal{V}$, where $\mathcal{V}$ is one such vector bundle. Let $\iota: Y\hookrightarrow X$ be the support of $\mathcal{E}$, meaning that $Y$ is the largest closed subvariety of $X$ such that $[\mathcal{E}]$ is in the image of $\iota_*: G_0(Y/G)\to G_0(X/G)$. 

Assume first that $Y$ is not $X$, so $\dim(Y)<\dim(X)$. Then $\left[\left(\pi^*\mathcal{E}\right)\otimes \mathcal{V}\right]$ is in the image of $\iota_*: G_0(Y/P)\to G_0(X/P)$, so the statement follows by the induction hypothesis. 

Assume next that the support of $\mathcal{E}$ is $X$. Then there exist a nonzero open $U\subset X$ and a representation $\Gamma$ of $G$ such that $\mathcal{E}|_U\cong \mathcal{O}_{U/G}\otimes \Gamma$. The statement for vector bundles $\left(\mathcal{O}_{X/G}\otimes \Gamma\right)\otimes \mathcal{V}$ follows from the case of $X=\text{Spec}(\mathbb{C})$, see also \cite[Proposition 1.2]{yz2}. It thus suffices to check the statement for the class $\left[\left(\pi^*\mathcal{E}\right)\otimes \mathcal{V}\right]-
\left[\left(\pi^*\left(\mathcal{O}_{X/G}\otimes\Gamma\right)\right)\otimes \mathcal{V}\right]\in G_0(X/G)$. The support of the class $[\mathcal{E}]-[\mathcal{O}_{X/G}\otimes\Gamma]$ is contained in $X\setminus U$, so it is not $X$, and thus the statement follows by the induction hypothesis. 
\end{proof}


\subsection{The extensions of the Hall algebra}
\label{exal}
Let $\textbf{A}^{\geq}_T$ be the extended Hall algebra generated by $\text{KHA}_T(Q,W)$ and generators $h^+_{i,n}$ for $i\in I$, $n\geq 0$.
Consider the formal power series $h^+_i(w):=\sum_{n\geq 0} h_{i,n}^+w^{-n}$. Define
\[\tau_{ii'}(z):=\frac{\zeta_{i'i}(z)}{\zeta_{ii'}(z^{-1})}.\]

We impose the relations 
\begin{equation}\label{rel}
    y*h^+_{i}(w)=h^+_{i}(w)*\left(y\prod_{\substack{i'\in I\\ 1\leq j'\leq d_{i'}}}\tau_{ii'}\left(\frac{z_{i'j'}}{w}\right)\right)
\end{equation}
for all $y$ in $K_0^T(D_{\text{sg}}(\X(d)_0))$ and $i\in I$. We also impose that $\left[h^+_{i}(w), h^+_{i'}(w')\right]=0$ for all $i,i'\in I$.
For references to explicit relations between $h_{i,n}^+$ and $\text{KHA}_T(Q,W)$, see Subsections \ref{ss1} and \ref{ss2}.

Similarly, define $\textbf{A}^{\leq}_T$ as the extended Hall algebra generated by $\text{KHA}_T(Q,W)^{\text{op}}$ and generators $h^-_{i,n}$ for $i\in I$, $n\geq 0$. Let $h^-_i(w):=\sum_{n\geq 0} h^-_{i,n}w^{n}$. Impose the relations 
\begin{equation}\label{rel2}
    y*h^-_{i}(w)=h^-_{i}(w)*\left(y\prod_{\substack{i'\in I\\ 1\leq j'\leq d_{i'}}}\tau_{ii'}\left(\frac{w}{z_{i'j'}}\right)\right)
\end{equation}
for all $y$ in $K_0^T(D_{\text{sg}}(\X(d)_0))$ and $i\in I$. We also impose that $\left[h^-_{i}(w), h^-_{i'}(w')\right]=0$ for all $i,i'\in I$. 

\subsection{The coproduct}

Let $a, b\in\mathbb{N}^I$ have sum $d$. Let $y$ be in $K_0^T(D_{\text{sg}}(\X(d)_0))$.

\subsubsection{} 
Define the coproduct for $\textbf{A}^{\geq}_T$:
\begin{align*}
\Delta\left(h^+_i(w)\right)&=h^+_i(w)\otimes h^+_i(w)\\
    \Delta_{a,b}(y)&=\prod_{\substack{i\in I\\ j>a_i}}h^+_i(z_{ij})*\Delta'_{a,b}(y)\\
    \Delta(y)&=\sum_{\substack{a,b\in\mathbb{N}^I\\a+b=d}}\Delta_{a,b}(y).
\end{align*}

\subsubsection{}
We next define the coproduct for $\textbf{A}^{\leq}_T$. 
Define 
\[\Delta'_{a,b}(y)=\text{sw}_{b,a} q_{b,a*}p_{b,a}^*(y),\]
where $\text{sw}_{b,a}$ is the map that interchanges the factors:
\[\text{sw}_{b,a}: K_0^T\left(D_{\text{sg}}(\X(b)_0)\right)\otimes K_0^T\left(D_{\text{sg}}(\X(a)_0)\right)\cong K_0^T\left(D_{\text{sg}}(\X(a)_0)\right)\otimes K_0^T\left(D_{\text{sg}}(\X(b)_0)\right).\]
$y$ is $\mathfrak{S}_{d}$-symmetric. A similar computation to Proposition \ref{p2}, part (b) shows that:
\begin{equation}\label{coprneg}
\Delta'_{a,b}(y)=\frac{\Phi_{a,b}(y)}{\prod_{\substack{i,i'\in I\\ j\leq a_i\\ j'>a_{i'}}}
\zeta_{ii'}\left(\frac{z_{ij}}{z_{i'j'}}\right)}.\end{equation}
We define the coproduct for $\textbf{A}^{\leq}_T$:
\begin{align*}
 \Delta\left(h^-_i(w)\right)&=h^-_i(w)\otimes h^-_i(w)\\
    \Delta_{a,b}(y)&=\Delta'_{a,b}(y)*\prod_{\substack{i\in I\\ j\leq a_i}}h^-_i(z_{ij})\\
    \Delta(y)&=\sum_{a,b\in\mathbb{N}^I}\Delta_{a,b}(y).
\end{align*}

\subsubsection{}\label{cocompl}
To make sense of the coproduct formulas, expand in the region $|z_{ij}|\ll |z_{i'j'}|$ for $j\leq a_i$, $j'>a_{i'}$, place all functions in $z_{ij}$ for $j\leq a_i$ to the left of $\otimes$ and all functions in $z_{ij}$ for $j>a_i$ to the right of $\otimes$. The coproduct has infinitely many terms, but each homogeneous $d$ part is in $\textbf{A}^{\geq}_T(d)$ or $\textbf{A}^{\leq}_T(d)$. We consider a completion of $\textbf{A}^{\geq }_T(d)\hat{\otimes} \textbf{A}^{\geq }_T(e)$ in which the coproduct makes sense. For example, we can consider the completion $\varprojlim_u K_0^T\left(\text{MF}(\X(d)\times \X(e), W)\right)/F_u$
of
$K_0^T\left(\text{MF}(\X(d)\times \X(e), W)\right)$. Here, $F_u$ is defined by \[F_u:=K_0^T\left(\text{MF}(\X(d)\times \X(e), W)_{\leq u}\right)\subset K_0^T\left(\text{MF}(\X(d)\times \X(e), W)\right)\] for $u\geq 1$, 
where $\text{MF}^T(\X(d)\times \X(e), W)_{\leq u}$ is the subcategory of $\text{MF}^T(\X(d)\times \X(e), W)$ of complexes on which $\lambda_{d,e}$ acts with weights $\leq u$.

\subsection{The counit maps}
The counit maps $\epsilon: \textbf{A}^{\geq}\to \textbf{k}$, $\epsilon: \textbf{A}^{\leq}\to \textbf{k}$ are defined by $\epsilon\left(y*h^{\pm}_i(z)\right)=0$ for $y$ nonzero in $\text{KHA}_T(Q,W)$ and $i\in I$ and by $\epsilon\left(h^{\pm}_i(z)\right)=1$. 

\subsection{Proof of Theorem \ref{thm1}}
$\text{KHA}_T$ is an algebra by \cite[Theorem 3.3]{P}. $\textbf{A}^{\geq}_T$ and $\textbf{A}^{\leq}_T$ are also algebras with unit $1$ in degree $0$. By a direct computation using Proposition \ref{p2} or by an argument similar to the one in \cite[Theorem 3.3]{P}, $\text{KHA}_T$, $\textbf{A}^{\geq}_T$, and $\textbf{A}^{\leq}_T$ are also coalgebras with counit $\epsilon$. We drop the suprascript from the notation of the formal series $h_i$, $i\in I$.

Let $c$ and $f$ be in $\mathbb{N}^I$. Let $\textbf{S}$ be the set of quadruples $(e_1, e_2, e_3, e_4)$ such that 
\[e_1+e_2=a,\,\, e_3+e_4=b,\,\, e_1+e_3=c,\,\, e_2+e_4=f.\]
We need to show that 
\begin{equation*}
    \Delta_{c,f}(x*y)=\sum_{\textbf{S}}\Delta_{e_1,e_2}(x)*\Delta_{e_3,e_4}(y).
\end{equation*}
Let $d\in\mathbb{N}^I$. Let $\lambda$ be a generic cocharacter of $G(d)$. Then \[\X(d)^\lambda\cong \mathcal{Y}(d)^\lambda= \times_{i\in I} \mathcal{Y}(\delta_i)^{\times d_i}\cong \times_{i\in I} \X(\delta_i)^{\times d_i} ,\] where $\delta_i$ is the dimension vector with $1$ in position $i$ and zero otherwise. Consider the inclusion
\[\iota_d:\mathcal{Y}(d)^\lambda\hookrightarrow \mathcal{Y}(d).\]
Let $\mathcal{I}$ be the set in $K_0^{T\times T(d)}(\text{pt})$ of functions $1-q^{-1}_\varepsilon z_{ij}^{-1}z_{i'j'}$ for $i,i'\in I$,
$j\leq d_i$, $j'\leq d_{i'}$ with $(ij)\neq (i'j')$, and $q_\varepsilon$ is a weight of $T$ corresponding to an edge from $i$ to $i'$. Then, by the localization theorem \cite[Theorem 2.5]{P}, 
\[\iota^*_d: K_0^T\big(D_{\text{sg}}(\mathcal{Y}(d)_0)\big)_\mathcal{I}\cong K_0^T\left(D_{\text{sg}}\left(\mathcal{Y}(d)^\lambda_0\right)\right)_\mathcal{I}.\] 
Let $\mathcal{N}$ be the normal bundle of $\mathcal{Y}(d)^\lambda$ in $\mathcal{Y}(d)$. There is a natural projection map \[u:\mathcal{Y}(d)\cong\text{Tot}\left(\mathcal{N}\right)\to \mathcal{Y}(d)^\lambda.\]
Consider the action of $\mathbb{C}^*$ on $\text{Tot}\left(u^*\mathcal{N}\right)$ which is the pullback of the action of $\mathbb{C}^*$ via $\lambda$ on $\text{Tot}\left(\mathcal{N}\right)$. We have that $\text{Tot}\left(u^*\mathcal{N}\right)^\lambda=\mathcal{Y}(d)$.
By \cite[Proposition 2.4]{P} for the vector bundle $u^*\mathcal{N}$ on $\mathcal{Y}(d)$ and the above action of $\mathbb{C}^*$, the following map is injective \[K_0^T\left(D_{\text{sg}}(\mathcal{Y}(d)_0)\right)\hookrightarrow K_0^T\left(D_{\text{sg}}(\mathcal{Y}(d)_0)\right)_\mathcal{I}.\] 
We write $\iota_d$ for maps from $\mathcal{Y}$ and $\X$; we write $\iota_A$ for maps from stacks $\mathcal{Y}_A$ with coordinates in the set $A=(A_i)_{i\in I}$.
It thus suffices to show that
\begin{equation}\label{prcopr}
    \left(\iota^*_c\otimes \iota_f^*\right)\Delta_{c,f}(x*y)=\left(\iota^*_c\otimes \iota_f^*\right)\sum_{\textbf{S}}\Delta_{e_1,e_2}(x)*\Delta_{e_3,e_4}(y).
\end{equation}

Let $A=(A_i)_{i\in I}$ and $B=(B_i)_{i\in I}$ be sets such that $A_i\sqcup B_i=\{1,\cdots, a_i+b_i\}$, $|A_i|=a_i$, $|B_i|=b_i$. By Proposition \ref{p2}, part (a), we have that 
\[\Phi_{a,b}(x*y)=\sum_{A,B}\left(x_Ay_B\prod_{\substack{i,i'\in I\\ j\in A_i\\ j'\in B_{i'}}}\zeta_{ii'}\left(\frac{z_{ij}}{z_{i'j'}}\right)\right),\]
where the sum is after all sets $A$ and $B$ as above.
Define the $I$-tuples of sets $A^1=\left(A^1_i\right)_{i\in I}$, $A^2=\left(A^2_i\right)_{i\in I}$, $B^1=\left(B^1_i\right)_{i\in I}$, $B^2=\left(B^2_i\right)_{i\in I}$:
\begin{align*}
  A^1_i&=A_i\cap \{1,\cdots, c_i\},\\
  A^2_i&=A_i\cap \{c_i+1,\cdots, c_i+f_i\},\\
  B^1_i&=B_i\cap \{1,\cdots, c_i\},\\
  B^2_i&=B_i\cap \{c_i+1,\cdots, c_i+f_i\}.
\end{align*}
We introduce the following sets of indices:
\begin{align*}
  \textbf{E}&=\{(i,i',j,j')|\,i,i'\in I,  (j,j')\in (A^1_i,B^1_{i'}), (A^1_i,B^2_{i'}), (A^2_i,B^2_{i'})\}\\
  \textbf{F}&=\{(i,i',j,j')|\,i,i'\in I,  (j,j')\in (A^2_i,A^1_{i'}), (B^2_i,A^1_{i'}), (B^2_i,B^1_{i'})\}\\
  \textbf{G}&=\{(i,i',j,j')|\,i,i'\in I,  (j,j')\in (A^1_i,B^1_{i'}),  (A^2_i,B^2_{i'})\}\\
  \textbf{H}&=\{(i,i',j,j')|\, i,i'\in I, (j,j')\in (A^1_i, B^2_{i'})\}\\
  \textbf{K}&=\{(i,i',j,j')|\, i,i'\in I, (j, j')\in (B^2_i, A^1_{i'})\}.
\end{align*}
Write $\Phi_{A^1,A^2}(x)=\sum x_{A^1}\otimes x_{A^2}$; we will drop the sum sign in the formula of $\Phi$ in what follows. 
We have that: 
\begin{align*}
    \left(\iota^*_c\otimes \iota_f^*\right)
    \Delta_{c,f}(x*y)
    &=\prod_{\substack{i\in I\\j>c_i}}
    h_i(z_{ij})* 
    \frac{\sum_{A,B}
    \iota^*_c(x_{A^1}y_{B^1})
    \otimes
    \iota_f^*(x_{A^2}y_{B^2})
    \prod_{\substack{i,i'\in I\\ j\in A_i\\ j'\in B_{i'}}}\zeta_{ii'}\left(\frac{z_{ij}}{z_{i'j'}}\right) 
    }
    {\prod_{\substack{i,i'\in I\\j>c_i\\j'\leq c_{i'}}}\zeta_{ii'}\left(\frac{z_{ij}}{z_{i'j'}}\right)}\\
    &=\prod_{\substack{i\in I\\j\in A^2_i\sqcup B^2_i}}h_i(z_{ij})*\sum_{\substack{j\in A^1_i\sqcup A^2_i\\j'\in B^1_{i'}\sqcup B^2_{i'}}} 
    \iota^*_c(x_{A^1}y_{B^1})\otimes \iota_f^*(x_{A^2}y_{B^2})
    \frac{\prod_{\textbf{E}} \zeta_{ii'}\left(\frac{z_{ij}}{z_{i'j'}}\right)}{\prod_{\textbf{F}} \zeta_{ii'}\left(\frac{z_{ij}}{z_{i'j'}}\right)}.
\end{align*}
Using Proposition \ref{p2}, part (b), we have that:
\begin{multline*}
    \Delta_{e_1,e_2}(x)*\Delta_{e_3,e_4}(y)=
    \left(
    \frac{ \prod_{\substack{i\in I\\j>e_{1i}}}h_i(z_{ij})*\Phi_{e_1,e_2}(x)}{
    \prod_{\substack{i,i'\in I\\j>e_{1i}\\j'\leq e_{1i'}}}\zeta_{ii'}\left(\frac{z_{ij}}{z_{i'j'}}\right)
    }
    \right)*
    \left(
    \frac{\prod_{\substack{i\in I\\j>a_i+e_{3i}}}h_i(z_{ij})*\Phi_{e_3,e_4}(y)}{
    \prod_{\substack{i,i'\in I\\j>a_i+e_{3i}\\j'\leq a_{i'}+e_{3i'}}}\zeta_{ii'}\left(\frac{z_{ij}}{z_{i'j'}}\right)
    }
    \right)
    \end{multline*}
Using that $\iota^*$ and $\Phi$ are compatible and using the formula for the (restriction of the) product from Proposition \ref{p2}, part (a), we obtain: 
\begin{multline*}
    \left(\iota^*_c\otimes \iota_f^*\right)\big(\Delta_{e_1,e_2}(x)*\Delta_{e_3,e_4}(y)\big)=
    \sum_{A^1,\cdots, B^2}
    \left(
    \frac{\prod_{\substack{i\in I\\j\in A^2_i}}h_i(z_{ij})*\left(\iota^*_{A^1}\otimes \iota^*_{A^2}\right)\left(\Phi_{A^1,A^2}(x)\right)}{
    \prod_{\substack{i,i'\in I\\j\in A^2_i\\j'\in A^1_{i'}}}\zeta_{ii'}\left(\frac{z_{ij}}{z_{i'j'}}\right)
    }
    \right)\\
    \left(
    \frac{\prod_{\substack{i\in I\\j\in B^2}}h_i(z_{ij})*\left(\iota^*_{B^1}\otimes \iota^*_{B^2}\right)\left(\Phi_{B^1,B^2}(y)\right)
    }{
    \prod_{\substack{i,i'\in I\\j\in B^2_i\\j'\in B^1_{i'}}}\zeta_{ii'}\left(\frac{z_{ij}}{z_{i'j'}}\right)
    }
    \right)
    \prod_{\textbf{G}}\zeta_{ii'}\left(\frac{z_{ij}}{z_{i'j'}}\right),
\end{multline*}
where the sum after all sets $A^1,\cdots, B^2$ as above.
We commute $x_{A^1}$ past the $h_i$s using the relations \eqref{rel}:
\begin{equation}\label{a}
x_{A^1}*\prod_{\substack{i'\in I\\j'\in B^2_{i'}}} h_{i'}(z_{i'j'})=
\prod_{\substack{i'\in I\\j'\in B^2_{i'}}} h_{i'}(z_{i'j'})*\left(x_{A^1}\prod_{\textbf{H}}
\tau_{i'i}\left(\frac{z_{ij}}{z_{i'j'}}\right)\right).
\end{equation}
We have that:
\begin{equation}\label{b}
\prod_{\textbf{H}}
\tau_{i'i}\left(\frac{z_{ij}}{z_{i'j'}}\right)=
\prod_{\textbf{H}}\frac{\zeta_{ii'}\left(\frac{z_{ij}}{z_{i'j'}}\right)}{\zeta_{i'i}\left(\frac{z_{i'j'}}{z_{ij}}\right)}=
\frac{
\prod_{\textbf{H}} \zeta_{ii'}\left(\frac{z_{ij}}{z_{i'j'}}\right)}
{\prod_{\textbf{K}} \zeta_{ii'}\left(\frac{z_{ij}}{z_{i'j'}}\right)}.
\end{equation}
Combining the formula in \eqref{b} with the formula for $\Delta_{e_1,e_2}(x)*\Delta_{e_3,e_4}(y)$, we obtain that:
\begin{multline*}
    (\iota^*_c\otimes\iota^*_f)\left(\Delta_{e_1,e_2}(x)*\Delta_{e_3,e_4}(y)\right)=
    \sum_{A^1,\cdots, B^2}\left(
    \prod_{\substack{i\in I\\j\in A^2_i}}h_i(z_{ij})
    \prod_{\substack{i\in I\\j\in  B^2_i}}h_i(z_{ij})\right)*\\
    \sum_{\substack{j\in A^1_i\sqcup A^2_i\\j'\in B^1_{i'}\sqcup B^2_{i'}}} \iota^*_c\left(x_{A^1}y_{B^1}\right)\otimes \iota_f^*\left(x_{A^2}y_{B^2}\right)
    \frac{\prod_{\textbf{E}} \zeta_{ii'}\left(\frac{z_{ij}}{z_{i'j'}}\right)}{\prod_{\textbf{F}} \zeta_{ii'}\left(\frac{z_{ij}}{z_{i'j'}}\right)}.
\end{multline*}
We thus obtain the equality claimed in \eqref{prcopr}.

\subsection{The Kunneth assumption for the tripled quiver}
For $Q$ a quiver, recall the definition of the tripled quiver from Subsection \ref{trqu}. Consider a torus $T\subset (\mathbb{C}^*)^{E}\times\mathbb{C}^*_q$, see Subsection \ref{trqu}. Let $\mathbb{K}:=K_0(BT)$ and let $\mathbb{F}:=\mathrm{Frac}\,\mathbb{K}$. For $M$ a $\mathbb{K}$-module, let $M_{\mathbb{F}}:=M\otimes_{\mathbb{K}}\mathbb{F}$.
We following is immediate using the localization theorem in K-theory and the Koszul equivalence, see Subsection \ref{trqu}:

\begin{prop}\label{propo}
The tripled quiver $\left(\widetilde{Q}, \widetilde{W}\right)$ and the torus $T$ satisfy the $\mathbb{F}$-localized Kunneth Assumption:
\[K_0^T\left(D_{\text{sg}}(\X(d)_0)\right)_{\mathbb{F}}\otimes_{\mathbb{F}} K_0^T\left(D_{\text{sg}}(\X(e)_0)\right)_{\mathbb{F}}\cong K_0^{T\times T}\big(D_{\text{sg}}\left((\X(d)\times\X(e))_0\right)\big)_{\mathbb{F}}.\]
\end{prop}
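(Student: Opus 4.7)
The plan is to combine the Köszul equivalence of Subsection \ref{trqu} with Thomason's localization theorem in equivariant K-theory, reducing the Kunneth identity to a triviality on the $T$-fixed locus.

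First, I would apply the Köszul equivalence \eqref{preequi} and the identification \eqref{grnotgr} to rewrite
\[K_0^T\!\left(D_{\text{sg}}(\widetilde{\X(d)}_0)\right)\;\cong\; G_0^T(\mathfrak{P}(d)).\]
Applying the same equivalence to the pair $\bigl(\widetilde{\X(d)}\times\widetilde{\X(e)},\,\widetilde{W}+\widetilde{W}\bigr)$ with the induced $T\times T$-action identifies the right-hand side of the Kunneth Assumption \eqref{assumption} with $G_0^{T\times T}(\mathfrak{P}(d)\times\mathfrak{P}(e))$. Using $G_0(\mathfrak{P}(d))\cong G_0(\mathfrak{P}(d)^{\text{cl}})$ as recalled in Subsection \ref{s2}, it suffices to prove the $\mathbb{F}$-localized Kunneth formula for the classical quotient stacks $R_{\text{prep}}(d)/G(d)$.

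Second, I would invoke Thomason's localization theorem in equivariant K-theory: for a quasi-projective $T\times G(d)$-scheme $X$, the restriction map
\[G_0^T(X/G(d))_{\mathbb{F}}\;\cong\;G_0^T\!\left((X/G(d))^T\right)_{\mathbb{F}}\]
is an isomorphism after inverting the images of nontrivial $T$-characters, which is exactly the effect of passing to $\mathbb{F}$-coefficients. Because $T\subset (\mathbb{C}^*)^E\times\mathbb{C}^*_q$ contains the scaling subgroup $\mathbb{C}^*_q$, which acts with nonzero weight on every linear map in a representation of the tripled quiver $\widetilde{Q}$, the $T$-fixed subvariety of $\widetilde{R(d)}$ collapses to the origin, and hence the $T$-fixed substack of $R_{\text{prep}}(d)/G(d)$ is the classifying stack $BG(d)$. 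The analogous computation for the product gives $(T\times T)$-fixed substack $BG(d)\times BG(e)$.

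Third, the remaining Kunneth formula
\[G_0^T(BG(d))_{\mathbb{F}}\otimes_{\mathbb{F}} G_0^T(BG(e))_{\mathbb{F}}\;\cong\;G_0^{T\times T}(BG(d)\times BG(e))_{\mathbb{F}}\]
is immediate, since both sides admit explicit presentations as $\mathbb{F}$-algebras of Weyl-invariant Laurent polynomials in the Chern roots of the tautological bundles on $BG(d)$ and $BG(e)$, and the external product map matches these presentations. The main obstacle I anticipate is justifying the application of Thomason's theorem to the (possibly derived) preprojective stack $\mathfrak{P}(d)$; however the reduction to the classical truncation in step one, together with the reductivity of $G(d)$ and the compatibility of the $T$-action with $G(d)$-conjugation, reduces this to the standard statement for quotient stacks of affine $G$-schemes by reductive groups.
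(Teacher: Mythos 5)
Your argument is correct and is exactly the route the paper takes: the paper's entire proof is the single remark that the claim is ``immediate using the localization theorem in K-theory and the Koszul equivalence,'' and your three steps (Köszul equivalence to pass to $G$-theory of the preprojective stack, Thomason localization to concentrate on the fixed locus, and the evident Kunneth formula for $K_0(BG(d))$ as Weyl-invariant Laurent polynomials) are the natural unwinding of that sentence. The one point to double-check is your claim that the $T$-fixed locus of $\widetilde{R(d)}$ is the origin because of $\mathbb{C}^*_q$ alone: as the weights are set up in Subsection 2.5, the edges of $Q$ itself may carry trivial $\mathbb{C}^*_q$-weight, and one needs the full subtorus $\mathbb{C}^*_t\times\mathbb{C}^*_q\subset T$ of \eqref{torusT} (or some torus acting with nonzero weight on every arrow of $\widetilde{Q}$) to collapse the fixed locus to a point.
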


It is an interesting problem to determine whether the Kunneth property holds integrally for the tripled quiver $\left(\widetilde{Q}, \widetilde{W}\right)$ and the torus $T$.

\section{The Hopf pairing}\label{s4}

\subsection{The antipode map} 
Let $(Q,W)$ be an arbitrary symmetric quiver with potential and let $T$ be a torus satisfying \eqref{torus3}. We add inverses $\left(h^{\pm}_{i,0}\right)^{-1}$ to $h^{\pm}_{i,0}$ for $i\in I$ in the algebras $\textbf{A}^{\leq}_T$ and $\textbf{A}^{\geq}_T$. 
Then the relations \eqref{rel} and \eqref{rel2} induce natural relations involving $\left(h^{\pm}_{i,0}\right)^{-1}$. The counit map is 
$\epsilon\left(\left(h^{\pm}_{i,0}\right)^{-1}\right)=1$.
The series $h^{\pm}_{i}(z)$ are invertible because $h^{\pm}_{i,0}$ are invertible.
Let $y$ be in $K_0^T(D_{\text{sg}}(\X(d)_0))$. The antipode map $S$ for $\textbf{A}^{\geq}_T$ is defined by:
\begin{align*}
    S\left(h^+_i(z)\right)&=\left(h^+_i(z)\right)^{-1}\\
    S(y)&=\left[\prod_{\substack{i\in I\\j\leq d_i}}\left(-h_i^+(z_{ij})\right)^{-1}\right]*y.
\end{align*}
The antipode map $S$ for $\textbf{A}^{\leq}_T$ is defined by:
\begin{align*}
    S\left(h^-_i(z)\right)&=\left(h^-_i(z)\right)^{-1}\\
    S(y)&=y*\left[\prod_{\substack{i\in I\\j\leq d_i}}\left(-h_i^-(z_{ij})\right)^{-1}\right].
\end{align*}
Using these antipode maps, $\textbf{A}^{\geq}_T$ and $\textbf{A}^{\leq}_T$ are Hopf algebras. The proof is the same as the one for the tripled quiver of the cyclic type $A$ quiver in \cite[Exercise IV.1]{n2}.

\subsection{The bialgebra pairing}
\label{pair}

\subsubsection{}
Let $(Q,W)$ be an arbitrary symmetric quiver with potential and let $T$ be a torus satisfying \eqref{torus3} and such that $R(d)^T$ is a point for any dimension vector $d\in\mathbb{N}^I$.
Consider the set $\mathcal{J}$ in $K_0^{T\times T(d)}(\text{pt})$ with functions $1-z_{ij}^{-1}z_{ik}$ for $i\in I$, $j\neq k$, and by $1-q^{-1}_\varepsilon z_{ij}^{-1}z_{i'j'}$ for $1\leq j\leq d_i$, $1\leq j'\leq d_{i'}$ and where $q_\varepsilon$ is a weight of $T$ corresponding to an edge $\varepsilon$ from $i$ to $i'$ of $Q$. None of the weights $q^{-1}_\varepsilon z_{ij}^{-1}z_{i'j'}$ are $1$ for $(i,j)=(i',j')$ by the assumption that $R(d)^T$ is a point.

Let $\iota_d: BT(d)\hookrightarrow \mathcal{Y}(d)$. We abuse notation and write 
\begin{equation*}\label{iota}
    \iota_d^*: K_0^T\left( D_{\text{sg}}(\X(d)_0)\right)\hookrightarrow K_0^T\left( D_{\text{sg}}(\mathcal{Y}(d)_0)\right)\to K_0^{T\times T(d)}(\text{pt}).
    \end{equation*}
Recall the definition of $\zeta_{ii'}$ from \eqref{zeta}. Define
\[
\widetilde{\zeta}_{ii'}\left(\frac{z_{ij}}{z_{i'j'}}\right)=
\begin{cases}
\zeta_{ii'}\left(\frac{z_{ij}}{z_{i'j'}}\right)\text{ if }(i,j)\neq (i',j')\\
\left(1-q_1^{-1}\right)\cdots \left(1-q_{\varepsilon(i,i)}^{-1}\right)\text{ otherwise}.
\end{cases}
\]
For $y$ in $K_0^T(D_{\text{sg}}(\X(d)_0))$, define
\[\Psi(y):=\frac{\iota_d^*(y)}{
\prod_{\substack{i,i'\in I\\j,j'}}\widetilde{\zeta}_{ii'}\left(\frac{z_{ij}}{z_{i'j'}}\right)}.\]
Define the pairing
\begin{multline*}
    \langle\,,\,\rangle: K_0^T(D_{\text{sg}}(\X(d)_0))\otimes K_0^T(D_{\text{sg}}(\X(d)_0))\xrightarrow{\otimes} K_0^T(D_{\text{sg}}(\X(d)_0))\to\\ K_0^T(D_{\text{sg}}(\X(d)_0))_\mathcal{J}\xrightarrow{\Psi}K_0^T(BT(d))_\mathcal{J}. 
\end{multline*}

\subsubsection{}\label{tripled}
Let $Q$ be an arbitrary quiver, let $\left(\widetilde{Q}, \widetilde{W}\right)$ be its tripled quiver, and let $T$ be a torus satisfying \eqref{torusT}. Then $\widetilde{R(d)}^T$ is a point. Let $\mathbf{F}=\text{Frac}\,K_0^{T}(\text{pt})$. 
Consider a new variable $p$. For $i\in I$, define 
\[\mu_i(z)=\frac{1-p^2z^{-1}}{1-q^2z^{-1}}.\]
Define the map 
\[\Phi: \text{image}\,(\Psi)_\mathbb{F}\to \mathbb{F} 
\] by the formula:
\[\Phi\left(\frac{y}{
\prod_{\substack{i,i'\in I\\j,j'}}\widetilde{\zeta}_{ii'}\left(\frac{z_{ij}}{z_{i'j'}}\right)}
\right)=\frac{1}{d!}
\int^{|q|<1<|p|}_{|z_{ij}|=1}
\frac{\iota_d^*(y)}{\prod_{\substack{i,i'\in I\\j,j'}}
\widetilde{\zeta}_{ii'}\left(\frac{z_{ij}}{z_{i'j'}}\right)
\prod_{\substack{i\in I\\j,k}}\mu_i\left(\frac{z_{ij}}{z_{ik}}\right)
}
\prod_{\substack{i\in I\\ j\leq d_i}} Dz_{ij}
\bigg|_{p\to q}
.\]
In the above, $Dz=\frac{dz}{2\pi iz}$. To evaluate $\Phi$, we first compute the integral in the region indicated
also assuming that $|q_e|=1$ for $e\in E$, and then we take the limit $p\to q$. The integral is equal to the sum of residues of the integrand in the region $|q|<1<|p|$, $|z_{ij}|=|q_e|=1$ for $i\in I$, $j\leq d_i$, $e\in E$.

 Define the pairing
 \begin{align*}
     (\,,\,):K_0^T\left(D_{\text{sg}}\left(\widetilde{\X(d)}_0\right)\right)_{\mathbb{F}}\otimes_{\mathbb{F}} K_0^T\left(D_{\text{sg}}\left(\widetilde{\X(d)}_0\right)\right)_{\mathbb{F}}&\to\mathbb{F}\\
     (x,y)&=\Phi\,\langle x,y\rangle.
 \end{align*}
Extend the pairing $(\,,\,)$ by the formulas:
\begin{align*}
    (\,,\,):\textbf{A}^{\leq}_{T,\mathbb{F}}\otimes_\mathbb{F} \textbf{A}^{\geq}_{T,\mathbb{F}}&\to \mathbb{F}\\
    (h_{i'}^-(z), h_i^+(w))&=\tau_{i'i}\left(\frac{w}{z}\right),\\
    (h_{i'}^-(z), y)&=0,\\
    (y, h_i^+(w))&=0
\end{align*}
for any element $y\in \text{KHA}^T\left(\widetilde{Q},\widetilde{W}\right)_\mathbb{F}$.

\textbf{Remark.}
By the localization theorem \cite[Theorem 2.5]{P}, there is an isomorphism
\[K_0^T\left( D_{\text{sg}}(\mathcal{Y}(d)_0)\right)_{\mathcal{I}}
\xrightarrow{\sim} K_0^{T\times T(d)}(\text{pt})_{\mathcal{I}}.\]
Using a shuffle description of the various maps involved, $\Psi$ can be thought as an inverse of the map in \eqref{iota}. It would be interesting to prove Theorem \ref{thm2} using such a geometric description of $\Phi$ and adjunction of the various functors used in the definitions of $m$ and $\Delta$.

\subsection{Proof of Theorem \ref{thm2}}

By a theorem of Negu\c{t}, the algebra $\text{KHA}_T\left(\widetilde{Q},\widetilde{W}\right)_{\mathbb{F}}$ is spherically generated, i.e. it is generated by elements in dimension $\varepsilon_i\in \mathbb{N}^I$, where $\varepsilon_i$ for $i\in I$ is a vector with $i$ component equal to $1$ and all other components are equal to zero, see \cite[Theorem 1.2]{n3}. 
We check that $(x, y*y')=(\Delta^{\text{op}}(x), y\otimes y')$ for $x, y, y'$ products of elements from such dimensions $\varepsilon_i$. Assume that $\deg(y)=a$, $\deg(y')=b$, $\deg(x)=a+b=d$. 

For elements $x, y, y'\in \text{KHA}_T\left(\widetilde{Q},\widetilde{W}\right)_\mathbb{F}$, the statement follows from the argument in \cite[Exercise IV.2]{n2}, which we now explain. Let $A_i=\{1,\cdots, a_i\}$ and $B_i=\{a_i+1,\cdots, d_i\}$. 
Let $\textbf{E}$ be the set of indices
\[\textbf{E}=\{(i,i',j,j')|\,i,i'\in I, (j,j')\in (A_i,A_{i'}), (B_i,B_{i'}), (B_i,A_{i'})\}.\]
We will use the shorthand notations $z_A$ for $z_{ij}$ with $i\in I$ and $j\in A_i$ and $z_B$ for $z_{ij}$ with $i\in I$ and $j\in B_i$.

Using Proposition \ref{p2}, part (a), and the fact that the region we integrate in is symmetric in the all variables $z_{ij}$, we obtain that:
\[(x,y*y')=\frac{1}{a!b!}
\int^{|q|<1<|p|}_{|z_A|=|z_B|=1}
\frac{\iota^*_d(x)\iota_a^*(y)\iota_b^*(y')}{\prod_{\textbf{E}}\widetilde{\zeta}_{ii'}\left(\frac{z_{ij}}{z_{i'j'}}\right)\prod_{\substack{i\in I\\j,k}}\mu_i\left(\frac{z_{ij}}{z_{ik}}\right)}
\prod_{\substack{i\in I\\ j\in A_i}} Dz_{ij} \prod_{\substack{i\in I\\ j\in B_i}} Dz_{ij}.\]
Call the integrand in the above formula $u$. Recall from the Subsection \ref{tripled} that the pairing between an element in $\text{KHA}_T\left(\widetilde{Q},\widetilde{W}\right)_\mathbb{F}$ and a $h^{\pm}_{i,n}$ is zero.
By the formula in \eqref{coprneg}, we have that:
\begin{align*}
    (\Delta^{\text{op}}(x), y\otimes y')&=
    (\Delta(x), y'\otimes y)\\
&=\frac{1}{a!b!}
\int^{|q|<1<|p|}_{|z_B|\ll|z_A|}
\frac{\iota^*_d(x)\iota_b^*(y')\iota_a^*(y)}{\prod_{\textbf{E}}\widetilde{\zeta}_{ii'}\left(\frac{z_{ij}}{z_{i'j'}}\right)\prod_{\substack{i\in I\\j,k}}\mu_i\left(\frac{z_{ij}}{z_{ik}}\right)}
\prod_{\substack{i\in I\\ j\in A_i}} Dz_{ij} \prod_{\substack{i\in I\\ j\in B_i}} Dz_{ij}\\
&=\frac{1}{a!b!}
\int^{|q|<1<|p|}_{|z_A|\gg|z_B|}
\frac{\iota^*_d(x)\iota_a^*(y)\iota_b^*(y')}{\prod_{\textbf{E}}\widetilde{\zeta}_{ii'}\left(\frac{z_{ij}}{z_{i'j'}}\right)\prod_{\substack{i\in I\\j,k}}\mu_i\left(\frac{z_{ij}}{z_{ik}}\right)}
\prod_{\substack{i\in I\\ j\in A_i}} Dz_{ij} \prod_{\substack{i\in I\\ j\in B_i}} Dz_{ij}.
\end{align*}
The last equality holds because $\iota_d^*(x)$ is $\mathfrak{S}_d$-symmetric. 
It suffices to show that
\begin{equation}\label{int2}
\int^{|q|<1<|p|}_{|z_A|=|z_B|=1}
u
\prod_{\substack{i\in I\\ j\in A_i}} Dz_{ij} \prod_{\substack{i\in I\\ j\in B_i}} Dz_{ij}
\bigg|_{p\to q}=
\int^{|q|<1<|p|}_{|z_A|\gg|z_B|}
u
\prod_{\substack{i\in I\\ j\in A_i}} Dz_{ij} \prod_{\substack{i\in I\\ j\in B_i}} Dz_{ij}
\bigg|_{p\to q}.
\end{equation}
The integrand $u$ has poles involving $z_A$ and $z_B$ in the left hand side region at 
\[z_A=qq_\varepsilon^{\pm 1}z_B,\quad z_{ij}=p^{-2}z_{ik},\quad z_{ij}=p^2z_{ik},\]
where $\varepsilon$ in an edge of $Q$, $j\in A_i$, $k\in B_i$.
There is a factor $z_{ij}=q^2z_{ik}$ in the numerator for $i\in A_i$, $j\in B_i$, and thus any residue obtained at $z_{ij}=p^2z_{ik}$ vanishes after setting $p\to q$. There are no other poles of $u$ involving $z_A$ and $z_B$ as we move to $|z_A|\gg |z_B|$. Similarly, we can also replace the factors $\mu_i\left(\frac{z_A}{z_B}\right)$ by $1$ as the two corresponding integrals coincide after passing to $p\to q$. The equality in \eqref{int2} thus follows. The statement $(x*x',y)=(x\otimes x', \Delta(y))$ follows similarly.
\\

The pairing $(\,,\,)$ extends to $\textbf{A}^{\leq}_{T,\mathbb{F}}\otimes_{\mathbb{F}} \textbf{A}^{\geq}_{T,\mathbb{F}}$. We show that
\begin{equation}\label{op}
\left(
\Delta^{\text{op}}(x), h^+_{i}(w)\otimes y\prod_{\substack{i'\in I\\j'\leq d_{i'}}}\tau_{ii'}\left(\frac{z_{i'j'}}{w}\right)
\right)
=
\left(\Delta^{\text{op}}(x), y\otimes h_{i}^+(w)\right)
.\end{equation}
Indeed, $\Delta^{\text{op}}(x)=x\otimes 1+\cdots+\prod_{\substack{i'\in I\\j'\leq d_{i'}}}h^-_{i'}(z_{i'j'})\otimes x$, where the middle terms do not contribute to the pairing, and thus the left hand side is equal to
\begin{align*}
\prod_{\substack{i'\in I\\j'\leq d_{i'}}}
\left(h^-_{i'}(z_{i'j'}), h^+_{i}(w)\right)
(x,y)
\prod_{\substack{i'\in I\\j'\leq d_{i'}}}\tau_{ii'}\left(\frac{z_{i'j'}}{w}\right)&=
\prod_{\substack{i'\in I\\j'\leq d_{i'}}}\tau_{i'i}\left(\frac{w}{z_{i'j'}}\right)(x,y)\prod_{\substack{i'\in I\\j'\leq d_{i'}}}\tau_{ii'}\left(\frac{z_{i'j'}}{w}\right)\\
&=(x,y).
\end{align*}
The right hand side is $(x,y)\left(1, h_{i'}(w)\right)=(x,y)$, so the equality in \eqref{op} holds. Similarly, we have that
\begin{equation*}
\left(
h^-_{i}(w)\otimes y\prod_{\substack{i'\in I\\j'\leq d_{i'}}}\tau_{ii'}\left(\frac{w}{z_{i'j'}}\right), \Delta(x)
\right)
=
\left(y\otimes h_{i}^-(w), \Delta(x)\right)
.\end{equation*}

Finally, let $x$ be a non-zero element in the image of $\Psi$. There exists a monomial $u$ such that $\Phi(xu)\neq 0$. For $y$ be non-zero in $K_0^T\left(D_{\text{sg}}\left(\widetilde{\X(d)}_0\right)\right)_{\mathbb{F}}$ and non-zero monomial $u$, we have that $\langle y,u\rangle\neq 0$. Thus there is a monomial $u$ such that $(y,u)=\Phi\langle y,u\rangle\neq 0$.

\subsection{Examples.} 
\subsubsection{}\label{ss1}
Let $J$ be a Jordan quiver and let $\mathbb{C}^*$ act on representations of $J$ by scaling the linear map. 
By 
\cite{Ts1}, \cite{VV}, $\text{KHA}_{\mathbb{C}^*}(J,0)\cong U^{>}_q\left(L\mathfrak{sl}_2\right)$ and thus $\textbf{A}^{\geq}_{\mathbb{C}^*}\cong U_q\left(L\mathfrak{b}\right)\cong U^{\geq}_q\left(L\mathfrak{sl}_2\right)$, where $\mathfrak{b}$ is a Borel subalgebra of $\mathfrak{sl}_2$.
For $a\in\mathbb{Z}$,
denote by $e_a:=z^a$ generators of $\text{KHA}_{\mathbb{C}^*}(J,0)\subset \textbf{A}^{\geq}_{\mathbb{C}^*}$ and by $f_a:=z^a$ generators of $\text{KHA}_{\mathbb{C}^*}(J,0)^{\text{op}}\subset \textbf{A}^{\leq}_{\mathbb{C}^*}$. 
The formulas for the coproduct are:
\begin{align*}
    \Delta(e_a)&=e_a\otimes 1+\sum_{n\geq 0}h^+_n\otimes e_{a-n}\\
    \Delta(f_a)&=1\otimes f_a+\sum_{n\geq 0}f_{a-n}\otimes h^-_n.
\end{align*}
We next compute the bialgebra pairing on $\textbf{A}^{\leq}_{\mathbb{C}^*}\otimes \textbf{A}^{\geq}_{\mathbb{C}^*}$. For the relations in $\textbf{A}^{\leq}_{\mathbb{C}^*}$, see \cite[Relation 6.17, Example A.27]{S}, in particular $[H_n, e_a]$ is equal to $e_{a+n}$ up to a constant, where $H_n$ is explicitly defined in terms of $h^+_n$, see loc. cit.
By definition, we have that
\begin{align*}
\Big(h^-(z), h^+(w)\Big)&=\tau\left(\frac{w}{z}\right)=\frac{qw-z}{w-qz}\\
(h^-_b, e_a)&=0\\
(f_a, h^+_b)&=0.
\end{align*}
Finally, we compute 
\[(f_m, e_n)=\int^{|p|<1<|q|}_{|z|=1}\frac{z^{n+m}}{1-q^2}\frac{1-q^2}{1-p^2}Dz\bigg|_{p\to q}=\frac{1}{1-q^2}\int _{|z|=1}z^{n+m}Dz=\frac{\delta_{0,n+m}}{1-q^2}.\]
Replacing $q$ with $q^{-1}$ and up to a factor in $(f_m, e_n)$, we obtain the bialgebra pairing on $U^{\leq}_q\left(L\mathfrak{sl}_2\right)\otimes U^{\geq}_q\left(L\mathfrak{sl}_2\right)$ in \cite[Proposition 2.34]{Ts2}. Thus $\textbf{A}_{\mathbb{C}^*}\cong U_q\left(L\mathfrak{sl}_2\right)$.

\subsubsection{}\label{ss2} Consider the quiver 
$Q_3$ with one vertex and three loops $\ell_1,\ell_2,\ell_3$ with potential $W_3=\ell_1\ell_2\ell_3-\ell_1\ell_3\ell_2$. It is the tripled quiver of the Jordan quiver $J$. Let $T$ be the two dimensional torus acting with weights $t$ on the edge $\ell_1$, $t^{-1}$ on $\ell_2$, and with weights $q$ on $\ell_1$ and $\ell_2$ and $q^{-2}$ on $\ell_3$.  The spherical subalgebra of
$K_0^T\left(D_{\text{sg}}\left(\widetilde{\X(d)}_0\right)\right)_{\mathbb{F}}$ has been studied by Negu\c{t} \cite{n1}, \cite{n2}, and is isomorphic to $U_{q, t}^{>}\left(\widehat{\widehat{\mathfrak{gl}_1}}\right)$. Consider the generators $e_{n}:=z^n$ of $\text{KHA}_T(Q_3,W_3)_{\mathbb{F}}$ and $f_n:=z^n$ of $\text{KHA}_T(Q_3,W_3)_{\mathbb{F}}^{\text{op}}$. 
Then 
\[\Delta(e_{n})=e_{n}\otimes 1+\sum_{i\geq 0}h^+_i\otimes e_{n-i}.\]
The coproduct is the same as the one in \cite[Subsection 4.6]{n1}. For the relations in the extended algebra, there are explicitly defined $H_n$ in terms of $h^+_n$, see \cite[Relations (3.4) and (4.8)]{n1}, such that $[H_i, e_n]=e_{i+n}$.
The pairing on generators of $\text{KHA}_T(Q_3,W_3)_{\mathbb{F}}$ is:
\begin{align*}
(f_{m}, e_n)&=\int^{|p|<1<|q|}_{|z|=1}\frac{z^{n+m}}{(1-q^{-1}t)(1-q^{-1}t^{-1})(1-q^2)}Dz\bigg|_{p\to q}
\\&=\frac{\delta_{0,n+m}}{(1-q^{-1}t)(1-q^{-1}t^{-1})(1-q^2)}.\end{align*}
This is the same as the pairing in \cite[Subsection 4.6]{n1}.



\begin{thebibliography}{30}


\bibitem{d}
B.~Davison.
\newblock The critical CoHA of a quiver with potential.
\newblock {\em Quarterly Journal of Mathematics}, Vol 68 (2), pp. 635-703.

\bibitem{Da}
B.~Davison.
\newblock{The integrality conjecture and the cohomology of the preprojective stacks}.
\newblock \url{http://arxiv.org/abs/1602.02110}.

\bibitem{DM}
B.~Davison, S.~Meinhardt.
\newblock {Donaldson--Thomas theory for categories of homological dimension one with potential}.
\newblock \url{http://arxiv.org/abs/
1512.08898}.

\bibitem{EG}
D.~Edidin, W.~Graham. \newblock{Riemann-Roch for equivariant Chow groups}.
\newblock{\em Duke Math. J.} 102 (2000), 567--594.


\bibitem{ft}
B.L.~Feigin, A.I.~Tsymbaliuk.
\newblock{Equivariant K-theory of Hilbert schemes via shuffle algebra}.
\newblock{\em Kyoto J. Math.} 51 (2011).






\bibitem{I}
M. U.~Isik.
\newblock{Equivalence of the derived category of a variety with a singularity category}.
\newblock{\em Int. Math. Res. Not.} 2013, no. 12, 2787--2808. 


\bibitem{K1}
M.M.~Kapranov.
\newblock{Derived category of coherent sheaves on Grassmann manifolds. (Russian)}
\newblock{\em Izv. Akad. Nauk SSSR Ser. Mat.} 48 (1984), no. 1, 192--202.

\bibitem{K2}
M.M.~Kapranov.
\newblock{On the derived categories of coherent sheaves on some homogeneous spaces}. 
\newblock {\em Invent. Math.} 92 (1988), no. 3, 479--508.


\bibitem{ks}
M.~Kontsevich, Y.~Soibelman.
\newblock {Cohomological Hall algebra, exponential Hodge structures and motivic
  Donaldson--Thomas invariants}.
\newblock {\em {Comm. Num. Th. and Phys.}}, 5(2): 231--252, 2011.

\bibitem{n1}
A.~Negu\c{t}.
\newblock{The shuffle algebra revisited}.
\newblock {\em International Mathematics Research Notices}, Volume 2014, Issue 22, 2014, Pages 6242–6275.

\bibitem{n2}
A.~Negu\c{t}.
\newblock {Quantum Algebras and Cyclic Quiver Varieties}.
\newblock \url{http://arxiv.org/abs/1504.06525}.

\bibitem{n3}
A.~Negu\c{t}.
\newblock {Shuffle algebras for quivers and wheel conditions}.
\newblock \url{https://arxiv.org/pdf/2108.08779.pdf}.


\bibitem{n4}
A.~Negu\c{t}.
\newblock {Quantum loop groups for symmetric Cartan matrices}.
\newblock \url{https://arxiv.org/pdf/2207.05504.pdf}.

\bibitem{n5}
A.~Negu\c{t}.
\newblock {Quantum loop groups for arbitrary quivers}.
\newblock \url{https://arxiv.org/pdf/2209.09089.pdf}.


\bibitem{os} A.~Okounkov, A.~Smirnov.
\newblock{Quantum difference equation for Nakajima varieties}.
\newblock \url{http://arxiv.org/abs/1602.09007}.







\bibitem{P}
T.~P\u{a}durariu.
\newblock{Categorical and K-theoretic Hall algebras for quivers with potential.}
\newblock  \url{https://arxiv.org/abs/2107.13642}, to appear in {\em J. Inst. Math. Jussieu}.




\bibitem{Pr}
A.~Preygel.
\newblock{Thom-Sebastiani and Duality for Matrix Factorizations}.
\newblock \url{https://arxiv.org/pdf/1101.5834.pdf}. 

\bibitem{S}
O.~Schiffmann.
\newblock{Lectures on Hall algebras}.
\newblock {\em Geometric methods in representation theory}. II, 1--141, Sémin. Congr., 24-II, Soc. Math. France, Paris, 2012.

\bibitem{sv1} 
O.~Schiffmann, E.~Vasserot.
\newblock {The elliptic Hall algebra and the K-theory of the Hilbert scheme of $\mathbb{A}^2$}.
\newblock {\em Duke
Math. J.} 162 (2013), no. 2, 279--366.

\bibitem{SV2}
O.~Schiffmann, E.~Vasserot.
\newblock{On cohomological Hall algebras of quivers: Generators}.
\newblock  {\em J. Reine Angew. Math.} 760 (2020), 59--132.

\bibitem{T}
Y.~Toda.
\newblock{Categorical Donaldson-Thomas invariants for local surfaces}.
\newblock \url{https://arxiv.org/pdf/1907.09076.pdf}.


\bibitem{T5}
Y.~Toda.
\newblock{Categorical Donaldson-Thomas theory for local surfaces: $\mathbb{Z}/2$-periodic version.}
\newblock{preprint}.


\bibitem{Ts1}
A.~Tsymbaliuk.
\newblock{PBWD bases and shuffle algebra realizations for $U_v(L\mathfrak{sl}_n)$, $U_{v1,v2}(L\mathfrak{sl}_n)$,
$U_v(L\mathfrak{sl}(m|n))$ and their integral forms}.
\newblock{\em Selecta Mathematica} New Series (2021), 48pp. 



\bibitem{Ts2}
A.~Tsymbaliuk.
\newblock{Duality of Lusztig and RTT integral forms of $U_v(L\mathfrak{sl}_n)$}.
\newblock{\em Journal of Pure and Applied Algebra} 225 (2021).





\bibitem{VV}
M.~Varagnolo, E.~Vasserot.
\newblock{ K-theoretic Hall algebras, quantum groups and super quantum groups}.
\newblock{\em Selecta Math. (N.S.)} 28 (2022), no. 1, Paper No. 7.
 



\bibitem{yz1}
Y.~Yang, G.~Zhao.
\newblock{ Cohomological Hall algebras and affine quantum groups}.
\newblock {\em Selecta Math.} (N.S.) 24 (2018), no. 2, 1093--1119.

\bibitem{yz2}
Y.~Yang, G.~Zhao.
\newblock{The cohomological Hall algebra of a preprojective algebra}.
\newblock{\em Proc. Lond. Math. Soc.} (3) 116 (2018), no. 5, 1029--1074.


\end{thebibliography}
\end{document}